\newcommand{\pk}[1]{\mathrm{P}_{#1}}
\newcommand{\pro}[1]{\widehat{#1}_h}
\newcommand{\mybar}{\overline}
\newcommand{\ch}{\mathcal C_h}
\newcommand{\dzero}{\delta_0}
\newcommand{\eps}{\varepsilon}
\newcommand{\aintone}{\alpha_{*}}
\newcommand{\mone}{$\blacktriangle$}
\newcommand{\mtwo}{\Large$\bullet$\normalsize}
\newcommand{\mthree}{$\blacksquare$}
\newcommand{\mfour}{$\vartriangle$}
\newcommand{\mfive}{\Large$\circ$\normalsize}
\newcommand{\msix}{$\square$}
\theoremstyle{plain}
\newtheorem*{theoremNonum}{Theorem}
\newtheorem*{propositionNonum}{Proposition}
\newtheorem{lemma}{Lemma}
\theoremstyle{remark}
\newtheorem{remark}{Remark}
\theoremstyle{definition}
\newtheorem{hypo}{Hypothesis}
\newtheorem{exam}{Example}
\newtheorem*{scheme1}{Scheme OsTH}
\newtheorem*{scheme2}{Scheme OsPstab}
\newtheorem*{scheme3}{Scheme NSTH}
\newtheorem*{scheme4}{Scheme NSPstab}
\newcommand{\schemeone}{OsTH}
\newcommand{\schemetwo}{OsPstab}
\newcommand{\schemethree}{NSTH}
\newcommand{\schemefour}{NSPstab}
\begin{document}

\title{A viscosity-independent error estimate \\ of a pressure-stabilized Lagrange--Galerkin scheme \\ for the Oseen problem}
\author{Shinya Uchiumi} 
\affil{
	Department of Applied Mathematics,
	Waseda University, Tokyo, Japan\\ 
	\texttt{uchiumi@aoni.waseda.jp}
}

\date{\today}

\maketitle

\begin{abstract}
We consider a pressure-stabilized Lagrange--Galerkin scheme for the transient Oseen problem with small viscosity. 
In the scheme we use the equal-order approximation of order $k$ for both the velocity and pressure, and add a symmetric pressure stabilization term. 
We show an error estimate for the velocity with a constant independent of the viscosity if the exact solution is sufficiently smooth.
Numerical examples show high accuracy of the scheme for problems with small viscosity.

\paragraph{Keywords:}
Transient Oseen problem, Lagrange--Galerkin scheme, Finite element method, Equal-order elements, Symmetric pressure stabilization, Dependence on viscosity
\end{abstract}

\section{Introduction} \label{sec:intro}
We consider a finite element scheme for the transient Oseen problem, known as linearizion of the Navier--Stokes problem, with small viscosity.
In this paper we construct a pressure-stabilized Lagrange--Galerkin (LG) scheme 
with higher-order elements, and show an error estimate independent of the viscosity.

When the viscosity is small, 
the finite element method suffers from two kinds of instabilities.
We begin with the issue of the material derivative.
In such case
the convection is dominated and it is important to put weight on information in the upwind direction to make schemes stable.
We here focus on the LG method, 
e.g. \cite{NotsuTabataOseen2015,NotsuTabata2016M2AN,Pironneau1982,Suli,TabataUchiumiNSA}, 
which is a combination of the characteristics method and the finite element method.
One of the advantages of it is that the resultant matrix is symmetric, which allows us to use efficient linear solvers \cite{benzi_golub_liesen_2005}.
Recently a LG method with a locally linearized velocity \cite{TSTEng} has been developed \cite{TabataUchiumiNSA} and convergence has been shown. The locally linearized velocity overcomes the difficulty in computing composite function term that appears in LG schemes.
In \cite{TabataUchiumiNSA} inf-sup stable elements \cite{2013_Boffi-Brezzi-Fortin} were used.

Besides the inf-sup stable elements, $\pk1/\pk1$-element  with a pressure stabilization term has been also used in LG methods,
where $\pk k/\pk l$ shows that we use the conforming triangular or tetrahedral element of order $k$ for the velocity and order $l$ for the pressure.
Notsu and Tabata have been proposed a LG scheme using the stabilization term of Brezzi and Pitk\"aranta \cite{BrezziPitkaranta} for the Navier--Stokes problem \cite{Notsu2008,NotsuTabata2008}, and analyzed the scheme for the Oseen problem and Navier--Stokes problem  \cite{NotsuTabataOseen2015,NotsuTabata2016M2AN}. 
Jia et al.~\cite{Jia2010} have been proposed and analyzed a LG scheme using the stabilization term of Bochev et al.~\cite{BochevEtal2006}.

Here we extend the $\pk1 / \pk1$ pressure-stabilized LG scheme to higher-order elements.
Simple symmetric stabilization terms for higher-order elements have been presented and applied to stationary problems in, e.g., \cite{BeckerBraack2001,BrezziFortin2001,Burman2008,DohrmannBochev2004,Silvester1994} and to the transient Stokes problem in \cite{BurmanFernandez2008}.
On the other hand, classical stabilization terms based on the residual of the momentum equations also have been studied for stationary problems in, e.g., \cite{DouglasWang1989,FrancaStenberg1991,HughesFrancaBalestra} and for the transient Stokes problem in \cite{JohnNovo2015}.
These terms are, however, rather complicated to implement compared to the symmetric stabilization especially for transient problems. 

Apart from the issue of the material derivative in the Oseen or Navier--Stokes problems,
dependence on the viscosity appears even in the Stokes problems.
Numerical solutions of the velocities contain approximation errors of the pressures multiplied by the inverse of the viscosity in standard finite element methods (e.g. \cite{SiamRDivergence}).
The grad-div stabilization \cite{GlowinskiLeTallec} is a choice to improve stability. 
Error analyses independent of
the viscosity were performed in \cite{OlshanskiiReusken} for the Stokes problem and in \cite{deFrutos2016} for the transient Oseen problem relying on this term.
In \cite{BermejoSaavedra2016} a LG scheme was developed for the Navier--Stokes problem with local projection stabilization that includes the grad-div term.

In this paper we use $\pk k / \pk k$-element, $k \geq 1$, and pressure-stabilization in the LG scheme for the transient Oseen problem, and show an error estimate independent of the viscosity.
In the scheme the symmetric pressure stabilization of Burman \cite{Burman2008} is used and symmetry of the LG method is inherited.
Although a pressure stabilized scheme for the transient Stokes problem has been analyzed by Burman and Fern\'andez \cite{BurmanFernandez2008},
we here pay attention to the constant of the stabilization term.
We consider the case where the viscosity $\nu$ is small and the exact solution is sufficiently smooth.
The error bound presented here is of order $\Delta t+h^2+h^k$ in the $L^2$-norm for the velocity and for $\nu^{1/2}$ times the gradient of the velocity, with constants independent of $\nu$.
Here, $\Delta t$ is a time increment, $h$ is a spacial mesh size.
This scheme is essentially unconditionally stable, that is, we can take $\Delta t$ and $h$ independently.
The grad-div stabilization is not needed in our analysis.
The technique used in our estimate is a projection of the exact solution of the velocity with the error independent of $\nu$.
A similar way was used by de Frutos et al.~\cite{deFrutos2016}.

This paper is organized as follows.
In the next section, after preparing notation, we state the Oseen problem and a pressure-stabilized LG scheme.
In Section \ref{sec:errorEstimate} we show an error estimate with a constant independent of the viscosity and give a proof.
In Section \ref{sec:numericalResults} we give some numerical results 
that show high accuracy for small viscosity and large pressures, 
and additionally show results of the Navier--Stokes problem.
In Section \ref{sec:conclusions} we give conclusions.
In Appendix we recall some lemmas used in the LG methods.

\section{A pressure-stabilized LG scheme for the Oseen problem}
\label{sec:continuousProb}

We prepare notation used throughout this paper, state the Oseen problem 
and then introduce our scheme.

Let $\Omega$ be a polygonal or polyhedral domain of $\mathbb R^d ~ (d=2,3)$.
We use the Sobolev spaces $W^{m,p}(\Omega)$ equipped with the norm $\|\cdot\|_{m,p}$ and the semi-norm $|\cdot|_{m,p}$ for $p\in [1,\infty]$ and a non-negative integer $m$.  
We denote $W^{0,p}(\Omega)$ by $L^p(\Omega)$.  
$W^{1,p}_0(\Omega)$ is the subspace of $W^{1,p}(\Omega)$ consisting of functions whose traces vanish on the boundary of $\Omega$.
When $p=2$, we denote $W^{m,2}(\Omega)$ by $H^m(\Omega)$ and drop the subscript $2$ in the corresponding norm and semi-norm.
For the vector-valued function $w\in W^{1,\infty}(\Omega)^d$ we define the semi-norm $|w|_{1,\infty}$ by
\begin{equation*}
\biggl\lVert\biggl[\sum_{i,j=1}^d \left( \frac{\partial w_i}{\partial x_j} \right)^2 \biggr]^{1/2}\biggr\rVert_{0,\infty}.
\end{equation*}
The pair of parentheses $(\cdot , \cdot)$ shows the $L^2(\Omega)^i$-inner product for $i=1, d$ or $d\times d$.
$L^2_0(\Omega)$ is the space of functions $q \in L^2(\Omega)$ satisfying $(q,1)=0$. 
We also use the notation $|\cdot|_{m,K}$ and $(\cdot,\cdot)_K$ for the semi-norm and the inner product on a set $K$.

Let $T>0$ be a time.
For a Sobolev space $X(\Omega)^i$, $i=1, d$,  
we use the abbreviations 
$H^m(X)=H^m(0,T;X(\Omega)^i)$ and $C(X)=C([0,T];X(\Omega)^i)$.
We define the function space $Z^m$ by
\begin{equation*}
\begin{split}
	Z^m &:= \{ v\in H^j(0,T; H^{m-j}(\Omega)^d); ~ j=0,\dots,m, \| v \|_{Z^m}<\infty \},\\
	\| v \|_{Z^m} &:= 
	\biggl( \sum_{j=0}^m \|v\|_{H^j(0,T;H^{m-j}(\Omega)^d)}^2 \biggr) ^{1/2}.
\end{split}
\end{equation*}
We also use the notation $H^m(t_1, t_2; X)$ and $Z^m(t_1,t_2)$ for spaces on a time interval $(t_1, t_2)$.

We consider the Oseen problem: find
$(u,p):\Omega \times (0,T) \to \mathbb R^d \times \mathbb R$ such that
\begin{equation}\label{Os} 
\begin{split}
\frac{\partial u}{\partial t}+(w \cdot \nabla) u 
- \nu \Delta u + \nabla p = f & \quad \text{in} \quad \Omega \times (0,T),\\
\nabla \cdot u = 0 & \quad \text{in} \quad \Omega \times (0,T),\\
u = 0 & \quad \text{on} \quad \partial \Omega \times (0,T),\\
u(\cdot,0) = u^0 & \quad \text{in} \quad \Omega,
\end{split}
\end{equation}
where 
$\partial \Omega$ represents the boundary of $\Omega$,
the constant $0<\nu\leq 1$ represents a viscosity, and
$w, f:\Omega \times (0,T) \to \mathbb R^d$ and
$u^0:\Omega \to \mathbb R^d$ are given functions.
%%%

We define the bilinear forms $a$ on $H^1_0(\Omega)^d\times H^1_0(\Omega)^d$ and $b$ on $H^1_0(\Omega)^d\times L^2_0(\Omega)$ by \index{$a$}\index{$b$}
\begin{equation*}
	a(u,v) := \nu(\nabla u,\nabla v), \quad
	b(v,q) := -(\nabla \cdot v,q).
\end{equation*}
Then, we can write the weak form of \eqref{Os} as follows:  
find $(u,p) : (0,T)\to H^1_0(\Omega)^d \times L^2_0(\Omega)$ such that for $t\in (0,T)$, 
\begin{subequations}\label{eq:Osweak}
\begin{align}
	\left( \Bigl( \frac{\partial u}{\partial t} + (w \cdot \nabla) u \Bigr) (t),v\right) + a(u(t),v)+b(v,p(t)) &= (f(t),v), \label{eq:Osweaka} \\ 
	&\forall v\in H^1_0(\Omega)^d,  \notag \\
	b(u(t),q) =0, \quad & \forall q\in L^2_0(\Omega),
\end{align}
\end{subequations}
with $u(0)=u^0$.

We introduce time discretization.
Let 
$\Delta t>0$ be a time increment, 
$N_T := \lfloor T/\Delta t \rfloor$ the number of time steps, 
$t^n := n\Delta t$, and 
$\psi^n := \psi(\cdot,t^n)$ for a function $\psi$ defined in $\Omega \times (0,T)$.
For a set of functions $\psi=\{ \psi^n \}_{n=0}^{N_T}$
we use two norms 
$\|\cdot\|_{\ell^\infty(L^2)}$ and $\|\cdot\|_{\ell^2(L^2)}$
defined by
\begin{equation*}
\begin{split}
\|\psi\|_{\ell^\infty(L^2)} & := 
\max \left\{ \|\psi^n\|_{  0  };n=0,\dots ,N_T \right\}, \\
\|\psi\|_{\ell^2(L^2)} & := 
\biggl(
\Delta t \sum_{n=1}^{N_T} \|\psi^n\|_{0}^2 
\biggr)^{1/2}.
\end{split}
\end{equation*}

Let $w$ be smooth. 
The characteristic curve $X(t; x,s)$ 
is defined by the solution of the system of the ordinary differential equations, 
\begin{equation} \label{eq:xode}
\begin{split}
	\frac{dX}{dt}(t; x,s)&=w(X(t;x,s),t), \quad t<s,\\
	X(s;x,s)&=x.
\end{split}
\end{equation}
Then, we can write the material derivative term $\frac{\partial u}{\partial t}+ (w\cdot \nabla) u$ as follows:
\begin{equation*}
\left(\frac{\partial u}{\partial t} + (w \cdot \nabla) u \right)(X(t),t)
=\frac{d}{dt} u(X(t),t).
\end{equation*}
For $w^*:\Omega \to \mathbb R^d$ we define the mapping $X_1(w^*):\Omega \to \mathbb R^d$ by
\begin{equation}\label{eq:x1def}
(X_1(w^*))(x) := x - w^*(x)\Delta t.
\end{equation}
\begin{remark}
	The image of $x$ by $X_1(w(\cdot,t))$ is nothing but the approximate value 
	of $X(t-\Delta t;x,t)$ obtained by solving \eqref{eq:xode} by the backward Euler method.
\end{remark}
Then, it holds that
\begin{equation*}
\frac{\partial u^n}{\partial t}+(w^n\cdot \nabla) u^n = \frac{u^n-u^{n-1}\circ X_1(w^{n-1})}{\Delta t} + O(\Delta t),
\end{equation*}
where the symbol $\circ$ stands for the composition of functions, 
e.g., $(g\circ f)(x) := g(f(x))$.

We next introduce spacial discretization.
Let $\{\mathcal T_h\}_{h \downarrow 0}$ be a regular family of triangulations of $\mybar \Omega$ \cite{Ciarlet}, $h_K:=\operatorname{diam}(K)$ for an element $K\in \mathcal T_h$, and $h:= \max_{K\in \mathcal T_h} h_K$. 
For a positive integer $m$, the finite element space of order $m$ is defined by
\[
	W_h^{(m)} := \{ \psi_h \in C(\mybar\Omega); ~ \psi_{h|K} \in \pk m(K), ~ \forall K \in \mathcal T_h \},
\]
where $\pk m(K)$ is the set of polynomials on $K$ whose degrees are equal to or less than $m$.
Let $\Pi_h^{(m)}: C(\mybar \Omega) \to W_h^{(m)}$ be the Lagrange interpolation operator, which is naturally extended to vector-valued functions.

We begin with a scheme using the standard $\pk k/\pk{k-1}$-finite element, which is called (generalized) Taylor--Hood element.
Let
\begin{equation} \label{eq:elemTH}
	V_h \times \mybar Q_h := ((W_h^{(k)})^d \cap H_0^1(\Omega)^d) \times (W_h^{(k-1)} \cap L_0^2(\Omega))
\end{equation} 
be the $\pk k/\pk{k-1}$-finite element space for $k \geq 2$.
The LG scheme with a locally linearized velocity and this Taylor--Hood element for the Oseen problem (\schemeone) is stated as follows:

\begin{scheme1} 
	Let $u_h^0 \in V_h$ be an approximation of $u^0$.
	Find $\{(u_h^n, p_h^n)\}_{n=1}^{N_T} \subset V_h \times \mybar Q_h$ such that
	\begin{subequations} \label{eq:scheme1}
		\begin{alignat}{2}
		\biggl( \frac{u_h^{n} - u_h^{n-1}\circ X_{1}(\Pi_h^{(1)} w^{n-1})}{\Delta t}, v_h \biggr) + a(u_h^n, v_h) + & b(v_h, p_h^n) \qquad \notag \\ & = (f^n,v_h),  ~ && \forall v_h \in V_h, 
		\\
		b(u_h^n, q_h) &= 0, ~ && \forall q_h \in \mybar Q_h, 
		\end{alignat}
	\end{subequations}
\end{scheme1}

When $k=2$, this type of scheme for the Navier--Stokes problem has already been introduced and analyzed in \cite{TabataUchiumiNSA}. 
In the mapping $X_1(\cdot)$, a locally linearized velocity $\Pi_h^{(1)} w^{n-1}$ is used instead of the original velocity $w^{n-1}$.
If the original velocity is used, it is difficult to evaluate the exact value of integration.
The next proposition assures that the scheme with the locally linearized velocity is exactly computable.

\begin{propositionNonum}[\cite{TabataUchiumi1,TabataUchiumiNSA}]
\label{prop:exactly}
Let $u_h$, $v_h \in (W_h^{(m)})^d$ for a positive integer $m$.
Suppose that 
\begin{equation} \label{eq:assumptionExact}
	w^* \in W^{1,\infty}_0(\Omega)^d \quad \text{and} \quad \aintone \Delta t |w^*|_{1,\infty} < 1, 
\end{equation}
where $\aintone$ is the constant defined in \eqref{eq:aintone} below.
Then, $\int_{\Omega} (u_h \circ X_{1}(\Pi_h^{(1)} w^{*})) \cdot v_h dx$ is exactly computable.
\end{propositionNonum}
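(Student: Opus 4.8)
The plan is to prove this by exploiting the fact that, under the smallness condition \eqref{eq:assumptionExact}, the mapping $\mathcal F := X_1(\Pi_h^{(1)} w^*)$ is a piecewise-affine homeomorphism of $\mybar\Omega$ onto itself that carries the triangulation $\mathcal T_h$ onto a second partition of $\mybar\Omega$ into straight simplices; on the common refinement of this second partition with $\mathcal T_h$ the integrand is piecewise polynomial of bounded degree, so the integral reduces to a finite sum of integrals of polynomials over polytopes, each computable exactly.

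First I would record the local structure of $\mathcal F$. Since $\Pi_h^{(1)} w^*$ is continuous on $\mybar\Omega$ and affine on each $K\in\mathcal T_h$, the restriction $\mathcal F|_K$ is the affine map $F_K(x)=x-\Delta t\,(\Pi_h^{(1)} w^*)|_K(x)$, with constant Jacobian $DF_K = I - \Delta t\, D\bigl((\Pi_h^{(1)} w^*)|_K\bigr)$. Using the definition of $\aintone$ in \eqref{eq:aintone} together with $\aintone\Delta t|w^*|_{1,\infty}<1$, I would bound $\Delta t\,\bigl\lVert D((\Pi_h^{(1)} w^*)|_K)\bigr\rVert<1$ on every element, so that $\det DF_K>0$ and $F_K$ is an affine isomorphism of $K$ onto the simplex $F_K(K)$. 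Because $w^*\in W^{1,\infty}_0(\Omega)^d$ vanishes on $\partial\Omega$, the map $\mathcal F$ fixes $\partial\Omega$; a continuous piecewise-affine map of $\mybar\Omega$ that is injective on each simplex and fixes the boundary is a homeomorphism of $\mybar\Omega$ onto itself (this is the geometric lemma underlying the LG method; cf.\ the appendix and \cite{TabataUchiumi1,TabataUchiumiNSA}), so in particular $\mathcal F(\mybar\Omega)\subset\mybar\Omega$ and $\{F_K(K)\}_{K\in\mathcal T_h}$ tiles $\mybar\Omega$.

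Next I would form the refinement. For each ordered pair $K,K'\in\mathcal T_h$ the set $F_K(K)\cap K'$ is an intersection of two simplices, hence a convex polytope, which is subdivided into finitely many simplices; its $F_K$-preimage is therefore a finite union of simplices contained in $K$. On each such preimage piece the integrand equals $\bigl(u_h|_{K'}\circ F_K\bigr)\cdot v_h|_K$, a polynomial of degree at most $2m$ (a degree-$m$ polynomial composed with the affine $F_K$, times a degree-$m$ polynomial). Hence
\[
\int_{\Omega}\bigl(u_h\circ\mathcal F\bigr)\cdot v_h\,dx=\sum_{K,K'}\ \int_{F_K^{-1}(F_K(K)\cap K')}\bigl(u_h|_{K'}\circ F_K\bigr)\cdot v_h|_K\,dx,
\]
a finite sum of integrals of polynomials of degree at most $2m$ over simplices, each evaluated exactly by a quadrature rule of order $2m$. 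The step I expect to be the main obstacle is the global injectivity and the tiling property of $\mathcal F$: the constant $\aintone$ in \eqref{eq:assumptionExact} is calibrated precisely so that $DF_K$ stays nondegenerate simultaneously on all elements, while the hypothesis $w^*\in W^{1,\infty}_0$ is what keeps the image pieces $F_K(K)$ inside $\mybar\Omega$ and prevents overlaps; once this geometric fact is in hand, the polynomial exactness of the remaining finitely many integrals is routine.
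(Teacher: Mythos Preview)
The paper does not supply a proof of this Proposition; it is quoted from \cite{TabataUchiumi1,TabataUchiumiNSA}, so there is no in-paper argument to compare against. Your sketch is the correct one and is precisely the mechanism exploited in those references: the locally linearized velocity $\Pi_h^{(1)}w^*$ makes $X_1(\Pi_h^{(1)}w^*)$ piecewise affine, so each element is mapped to a simplex, the image simplices are intersected with the original mesh, and on every resulting polytope the integrand is a polynomial of degree at most $2m$, hence integrable by an exact quadrature rule.

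Two small points worth tightening. First, to conclude that $\mathcal F$ fixes $\partial\Omega$ you need $\Pi_h^{(1)}w^*$ (not just $w^*$) to vanish on $\partial\Omega$; this holds because $w^*\in W^{1,\infty}_0(\Omega)^d$ vanishes at all boundary vertices and $\Pi_h^{(1)}w^*$ is affine on each boundary face. Second, the ``geometric lemma'' you invoke is not proved in the appendix of this paper (Lemmas~\ref{lemm:jacobiest} and~\ref{lemm:bijective_1} give only the Jacobian and $L^2$-composition bounds); the bijectivity of $\mathcal F:\mybar\Omega\to\mybar\Omega$ is established in the cited works via a degree/perturbation-of-identity argument using $\det DF_K>0$ on every element together with $\mathcal F|_{\partial\Omega}=\mathrm{id}$. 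You have correctly flagged this as the only nontrivial step; once it is in place, your decomposition and the exact-quadrature conclusion are routine, exactly as in \cite{TabataUchiumi1,TabataUchiumiNSA}.
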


With the assumption \eqref{eq:assumptionExact} for $w^* = w^{n-1}$ at each step $n$, \eqref{eq:scheme1} is exactly computable thanks to 
Proposition.
In \cite{TabataUchiumiNSA} the authors have analyzed the scheme to show 
the estimates
\[
\| \nabla(u_h -u) \|_{\ell^\infty(L^2)}, ~ \| p_h - p \|_{\ell^2(L^2)} 
\leq c (\nu^{-1}) (\Delta t + h^2),
\]
where the constant $c$ depends on $\nu^{-1}$ exponentially.

Here we use the equal-order element with pressure stabilization.
Let 
\[ V_h \times Q_h := ((W_h^{(k)})^d \cap H_0^1(\Omega)^d) \times (W_h^{(k)} \cap L_0^2(\Omega))\] 
be the equal-order $\pk k/\pk{k}$-finite element space for $k \geq 1$.
We define a pressure stabilization term $\ch: Q_h \times Q_h \to \mathbb R$, which enables us to use the equal-order element, by
\begin{equation*}
\begin{split}
	\ch(p_h, q_h) := \sum_{K\in \mathcal T_h} h_K^{2k} \sum_{|\alpha|=k} (D^\alpha p_h, D^\alpha q_h)_K,
\end{split}
\end{equation*}
where $\alpha$ is the multi-index and $D^\alpha$ is the partial differential operator.
We define the corresponding semi-norm on $Q_h$ by
\begin{equation} \label{eq:semi-norm} 
	|q_h|_h := \ch(q_h,q_h)^{1/2} 
	= \left( \sum_{K \in \mathcal T_h} h_K^{2k} |q_h|_{k,K}^2 \right)^{1/2}.  
\end{equation}

\begin{remark}
The term $\ch$ introduced by Burman \cite{Burman2008} is an extension of that by Brezzi and Pitk\"aranta \cite{BrezziPitkaranta} for the $\pk1/\pk1$-element to higher order elements.
For the stabilization term, instead of $\ch$, we can also choose another positive semi-definite bilinear form whose corresponding semi-norm is equivalent to \eqref{eq:semi-norm}. 
Examples include the terms in \cite{BeckerBraack2001,DohrmannBochev2004,Silvester1994}, as pointed out in \cite{Burman2008}.
\end{remark}

We are now in position to state a pressure-stabilized LG scheme for the Oseen problem (\schemetwo).

\begin{scheme2} 
Let $u_h^0 \in V_h$ be an approximation of $u^0$.
Find $\{(u_h^n, p_h^n)\}_{n=1}^{N_T} \subset V_h \times Q_h$ such that
\begin{subequations} \label{eq:scheme}
\begin{align}
	\biggl( \frac{u_h^{n} - u_h^{n-1}\circ X_{1}(\Pi_h^{(1)} w^{n-1})}{\Delta t}, v_h \biggr) + a(u_h^n, v_h) + b(v_h, p_h^n) \qquad \notag \\ = (f^n,v_h),  ~ \forall v_h \in V_h, \label{eq:schemea}\\
	b(u_h^n, q_h) - \dzero \ch(p_h^n, q_h) = 0, ~ \forall q_h \in Q_h, \label{eq:schemeb}
\end{align}
\end{subequations}
where 
$\dzero>0$ is a stabilization parameter.
\end{scheme2}

With the assumption \eqref{eq:assumptionExact} for $w^* = w^{n-1}$ at each step $n$, \eqref{eq:scheme} is exactly  computable and has a unique solution $(u_h^n, p_h^n)$ thanks to 
Proposition
and the stabilization term $\ch$ \cite{Burman2008}.
The error introduce by the locally linearized velocity is properly estimated in Theorem below.

\begin{remark}
\begin{enumerate}
\item In Scheme 
\schemetwo,
the resultant matrix to be solved is symmetric, which enables us to use efficient linear solvers \cite{benzi_golub_liesen_2005}.
\item When $k=1$, Notsu and Tabata \cite{NotsuTabataOseen2015} proposed and analyzed a pressure-stabilized LG scheme, where the locally linearized velocity was not introduced. 
\item Burman and Fern\'andez \cite{BurmanFernandez2008} proposed and analyzed a scheme for the transient Stokes problem 
using a same type of pressure stabilization.
Since in their choice the stabilization parameter $\dzero$ is proportional to $1/\nu$,  
it seems to be difficult to get error estimates independent of $\nu$, which we will show in the next section.
\end{enumerate}
\end{remark}
%%%%%%%%%%%%%%%%%%%%%%%

\section{An error estimate focused on the viscosity for the Oseen problem}
\label{sec:errorEstimate}

Before stating the result we introduce hypotheses.

\begin{hypo}
	\label{hypo:reg}
The velocity $w$ and the exact solution $(u,p)$ of the Oseen problem \eqref{Os} satisfies
\begin{align*}
	w  \in C( W^{1,\infty}_0 \cap W^{2,\infty} ) \cap H^1(L^\infty), ~ 
	u  \in Z^2 \cap H^1(H^{k+1}), ~
	p \in C(H^{k+1}).
\end{align*}
\end{hypo}

\begin{hypo} \label{hypo:dt}
The time increment $\Delta t$ satisfies $0<\Delta t \leq \Delta t_0$, where
\[
	\Delta t_0 := \frac{1}{4 \aintone |w|_{C(W^{1,\infty})}},
\]
and $\aintone$ is the constant defined in \eqref{eq:aintone} below.
\end{hypo}

\begin{hypo}[Triangulation] \label{hypo:mesh}
Every element $K\in \mathcal T_h$ has at least one internal vertex. 
\end{hypo}

\begin{hypo}[Choice of the initial value] \label{hypo:initial}
There exists a positive constant $c$ independent of $h$ such that
\[
	\| u_h^0 -u^0 \|_0 \leq ch^k |u^0|_k.
\]
\end{hypo}

\begin{remark} 
\begin{enumerate}
\item Hypothesis \ref{hypo:reg} implies that $u \in C(H^{k+1})$ and $u^0 \in H^{k+1}(\Omega)^d$.
\item Under Hypotheses \ref{hypo:reg} and \ref{hypo:dt}, the property \eqref{eq:assumptionExact} for $w^*=w^{n-1}$ at each step $n$ is clearly satisfied.
\item Hypothesis \ref{hypo:initial} is satisfied if we take $u_h^0$ as the Lagrange 
interpolation of $u^0$, for example.
\end{enumerate}
\end{remark}

\begin{theoremNonum} \label{theo:main}
Let $V_h \times Q_h$ be the $\pk k / \pk k$-finite element space for $k \geq 1$.
Suppose Hypotheses \ref{hypo:reg}--\ref{hypo:initial}.
Let $(u_h, p_h) := \{(u_h^n, p_h^n)\}_{n=0}^{N_T}$ be the solution of Scheme \schemetwo.
Then there exists a positive constant $c_*$ independent of $\nu$, $h$, $\Delta t$ such that
\begin{equation} \label{eq:mainEstimate}
	\| u_h-u \|_{\ell^\infty(L^2)}, \sqrt{\nu} \| \nabla(u_h-u) \|_{\ell^2(L^2)}
	\leq c_*( \Delta t + h^2 + h^k). 
\end{equation}
\end{theoremNonum}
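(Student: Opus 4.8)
The plan is to carry out the usual Lagrange--Galerkin energy argument, but with the exact velocity replaced by a finite element projection whose error does not depend on $\nu$; this replacement, combined with the stabilization parameter $\dzero$ being kept of order $1$ rather than of order $1/\nu$, is what makes the final bound $\nu$-uniform, and is where we depart from Burman--Fern\'andez~\cite{BurmanFernandez2008}. For each $t\in[0,T]$ I would introduce a pressure-stabilized Stokes-type projection $(\pro u,\pro p)(t)\in V_h\times Q_h$ of $(u,p)(t)$, defined by
\begin{align*}
a(\pro u-u,v_h)+b(v_h,\pro p-p)&=0, && \forall\,v_h\in V_h,\\
b(\pro u-u,q_h)-\dzero\ch(\pro p,q_h)&=-\dzero\ch(\Pi_h^{(k)}p,q_h), && \forall\,q_h\in Q_h,
\end{align*}
the second identity already using $\nabla\cdot u=0$; well-posedness of this projection and of Scheme~\schemetwo\ follows from the analysis of the stabilized element in~\cite{Burman2008} together with Hypothesis~\ref{hypo:mesh}. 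The core of this step is to establish, with $\eta:=\pro u-u$ and a constant $c$ independent of $\nu$, $h$ and $\Delta t$, the bounds
\[
\|\eta(t)\|_0\le c\,h^{k+1},\qquad \sqrt\nu\,\|\nabla\eta(t)\|_0\le c\,h^{k},\qquad \Bigl(\int_0^T\|\partial_t\eta\|_0^2\,dt\Bigr)^{1/2}\le c\,h^{k+1},
\]
together with the elementary estimate $|\Pi_h^{(k)}p(t)|_h\le c\,h^{k}(|p(t)|_k+|p(t)|_{k+1})$, which follows from interpolation estimates and accounts for the $h^k$ term in the theorem.

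\textbf{Error equations.}
Write $u_h^n-u^n=\theta_h^n+\eta^n$ with $\theta_h^n:=u_h^n-\pro u^n\in V_h$, and $p_h^n-p^n=\xi_h^n+(\pro p^n-p^n)$ with $\xi_h^n:=p_h^n-\pro p^n\in Q_h$. Subtracting Scheme~\schemetwo\ from the weak form~\eqref{eq:Osweak} at $t=t^n$ and using the orthogonality of Step~1 to cancel $a(\eta^n,v_h)+b(v_h,\pro p^n-p^n)$ in the momentum equation and $b(\eta^n,q_h)-\dzero\ch(\pro p^n,q_h)$ in the continuity equation, one obtains, for all $(v_h,q_h)\in V_h\times Q_h$,
\begin{align*}
&\Bigl(\tfrac{\theta_h^{n}-\theta_h^{n-1}\circ X_1(\Pi_h^{(1)}w^{n-1})}{\Delta t},v_h\Bigr)+a(\theta_h^n,v_h)+b(v_h,\xi_h^n)\\
&\qquad\qquad = -(R^n,v_h)-\Bigl(\tfrac{\eta^n-\eta^{n-1}\circ X_1(\Pi_h^{(1)}w^{n-1})}{\Delta t},v_h\Bigr),\\
&b(\theta_h^n,q_h)-\dzero\ch(\xi_h^n,q_h)=\dzero\ch(\Pi_h^{(k)}p^n,q_h),
\end{align*}
where $R^n:=\bigl(\tfrac{\partial u}{\partial t}+(w\cdot\nabla)u\bigr)^n-\tfrac{u^n-u^{n-1}\circ X_1(\Pi_h^{(1)}w^{n-1})}{\Delta t}$ is the Lagrange--Galerkin truncation error. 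Combining Hypothesis~\ref{hypo:reg} (which gives $u\in Z^2$), the lemmas on the mapping $X_1(\cdot)$ recalled in the Appendix, and the $O(h^2)$ perturbation caused by replacing $w^{n-1}$ by the locally linearized velocity $\Pi_h^{(1)}w^{n-1}$, one has $\bigl(\Delta t\sum_{n}\|R^n\|_0^2\bigr)^{1/2}\le c(\Delta t+h^2)$.

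\textbf{Energy estimate and conclusion.}
Testing the first error equation with $v_h=\theta_h^n$ and the second with $q_h=\xi_h^n$ and subtracting cancels the coupling $b(\theta_h^n,\xi_h^n)$ and leaves
\begin{align*}
&\tfrac{1}{\Delta t}\bigl(\theta_h^n-\theta_h^{n-1}\circ X_1(\Pi_h^{(1)}w^{n-1}),\theta_h^n\bigr)+\nu\|\nabla\theta_h^n\|_0^2+\dzero|\xi_h^n|_h^2\\
&\qquad = -(R^n,\theta_h^n)-\Bigl(\tfrac{\eta^n-\eta^{n-1}\circ X_1(\Pi_h^{(1)}w^{n-1})}{\Delta t},\theta_h^n\Bigr)-\dzero\ch(\Pi_h^{(k)}p^n,\xi_h^n).
\end{align*}
For the first term I would use $(\theta_h^n-\theta_h^{n-1}\circ X_1(\Pi_h^{(1)}w^{n-1}),\theta_h^n)\ge\tfrac12\|\theta_h^n\|_0^2-\tfrac12\|\theta_h^{n-1}\circ X_1(\Pi_h^{(1)}w^{n-1})\|_0^2$ together with the Jacobian bound $\|\phi_h\circ X_1(\Pi_h^{(1)}w^{n-1})\|_0\le(1+c\Delta t)\|\phi_h\|_0$ from the Appendix (valid under Hypothesis~\ref{hypo:dt}), producing a lower bound $\tfrac{1}{2\Delta t}(\|\theta_h^n\|_0^2-\|\theta_h^{n-1}\|_0^2)-\tfrac{c}{2}\|\theta_h^{n-1}\|_0^2$. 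On the right-hand side, $\dzero\ch(\Pi_h^{(k)}p^n,\xi_h^n)$ is handled by Young's inequality, absorbing $\tfrac{\dzero}{2}|\xi_h^n|_h^2$ and leaving $c\,h^{2k}$, and the truncation term gives $c\|R^n\|_0^2+\tfrac18\|\theta_h^n\|_0^2$. For the remaining term I would split $\eta^n-\eta^{n-1}\circ X_1(\Pi_h^{(1)}w^{n-1})=(\eta^n-\eta^{n-1})+(\eta^{n-1}-\eta^{n-1}\circ X_1(\Pi_h^{(1)}w^{n-1}))$, bound the first difference by $\int_{t^{n-1}}^{t^n}\|\partial_t\eta\|_0\,dt$, and --- the delicate point --- bound the second one by performing the change of variables $x\mapsto X_1(\Pi_h^{(1)}w^{n-1})^{-1}(x)$ so that the composition is transferred onto $\theta_h^n$, followed by the inverse inequality on $\theta_h^n\in V_h$, which yields $\tfrac{1}{\Delta t}|(\eta^{n-1}-\eta^{n-1}\circ X_1(\Pi_h^{(1)}w^{n-1}),\theta_h^n)|\le c\,h^{-1}\|\eta^{n-1}\|_0\,\|\theta_h^n\|_0\le c\,h^{2k}+\tfrac18\|\theta_h^n\|_0^2$ by Step~1. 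Summing over $n=1,\dots,m$, multiplying by $2\Delta t$, absorbing the $\tfrac{\nu}{2}$- and $\tfrac{\dzero}{2}$-contributions into the left-hand side, estimating $\|\theta_h^0\|_0$ by Hypothesis~\ref{hypo:initial} and Step~1, and applying the discrete Gronwall inequality gives $\|\theta_h\|_{\ell^\infty(L^2)}+\sqrt\nu\,\|\nabla\theta_h\|_{\ell^2(L^2)}\le c_*(\Delta t+h^2+h^k)$ with $c_*$ independent of $\nu$, $h$, $\Delta t$; the triangle inequality with the Step~1 bounds then yields~\eqref{eq:mainEstimate}.

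\textbf{Main obstacle.}
I expect the hard part to be the first step, specifically proving that the $L^2$-error and the time-differentiated $L^2$-error of the pressure-stabilized Stokes projection are of order $h^{k+1}$ \emph{uniformly in $\nu$}: the naive Stokes-projection argument, and the associated Aubin--Nitsche duality, produce bounds in which the pressure approximation error appears multiplied by $\nu^{-1}$, and removing this spurious factor forces one to exploit the pressure stabilization term in an essential way (relying on $\dzero$ being of order $1$), in the spirit of de Frutos et al.~\cite{deFrutos2016}. A second, smaller, technical point is the treatment of the Lagrange--Galerkin composition $\eta^{n-1}-\eta^{n-1}\circ X_1(\Pi_h^{(1)}w^{n-1})$ of the projection error in the energy estimate: bounding it by $c\Delta t\|\nabla\eta^{n-1}\|_0$ --- the obvious choice --- would reintroduce a factor $\nu^{-1/2}$ through $\|\nabla\eta^{n-1}\|_0$, so one must instead use the change of variables plus inverse inequality indicated above, which replaces $\|\nabla\eta^{n-1}\|_0$ by $h^{-1}\|\eta^{n-1}\|_0$ and keeps the constant $\nu$-independent.
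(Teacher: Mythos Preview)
Your overall architecture --- split $u_h-u$ into a discrete part and a projection error, run the LG energy argument, apply discrete Gronwall --- matches the paper. But your choice of projection is substantially more elaborate than the paper's, and that choice is precisely what creates the ``main obstacle'' you flag at the end.

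The paper does \emph{not} use a $\nu$-dependent stabilized Stokes projection of $(u,p)$. Instead, for $k\ge2$ it takes $z_h(t)$ to be the first component of the Stokes projection of $(u(t),0)$ with \emph{fixed unit viscosity} (and for $k=1$ simply the Lagrange interpolant); the pressure is handled separately by a Cl\'ement interpolant $r_h(t)$. Because the projection has $\nu=1$ and zero pressure, one gets $\|z_h-u\|_1\le ch^k|u|_{k+1}$ trivially independent of $\nu$, with no Aubin--Nitsche argument and no $h^{k+1}$ rate needed. The price is that the error equation retains the residuals $a(\eta^n,e_h^n)$ and $b(e_h^n,p^n-r_h^n)$ on the right-hand side, but these are harmless: the first is $\le\frac{\nu}{2}\|\nabla e_h^n\|_0^2+\frac{\nu}{2}\|\nabla\eta^n\|_0^2\le\frac{\nu}{2}\|\nabla e_h^n\|_0^2+ch^{2k}$ using $\nu\le1$; the second is integrated by parts to $(e_h^n,\nabla(p^n-r_h^n))\le\frac12\|e_h^n\|_0^2+ch^{2k}|p^n|_{k+1}^2$, which is the one place where having the pressure in $H^1$ (hence the equal-order space) is essential. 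The coupling term $b(z_h^n,\varepsilon_h^n)$ is controlled by the paper's Lemma~\ref{lemm:Best} via the auxiliary $\pk{k-1}$ pressure space, giving $ch^k|u^n|_{k+1}|\varepsilon_h^n|_h$, absorbed into $\dzero|\varepsilon_h^n|_h^2$.

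Your second technical worry also evaporates with this projection: since $\|\nabla\eta\|_0\le ch^k$ is already $\nu$-uniform, the composition term $R_3^n=(\eta^n-\eta^{n-1}\circ X_{1h}^{n-1})/\Delta t$ is bounded directly by Lemma~\ref{lemm:estR3pre} through $\|\partial_t\eta\|_{L^2(L^2)}+\|\nabla\eta\|_{L^2(L^2)}$, with no inverse inequality, no change of variables onto $\theta_h^n$, and no need for an $L^2$ projection error of order $h^{k+1}$.

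In short, your route may be viable, but it hinges on a $\nu$-uniform $h^{k+1}$ $L^2$ estimate for a $\nu$-dependent stabilized Stokes projection that you do not prove and correctly identify as hard; the paper sidesteps this entirely by decoupling the velocity projection from both the physical viscosity and the pressure. That decoupling, together with the integration-by-parts treatment of $b(e_h^n,p^n-r_h^n)$, is the actual mechanism behind the $\nu$-independence.
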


\begin{remark}
\begin{enumerate}
\item The constant $c_*$ depends on Sobolev norms of $w, u$ and $p$, and the stabilization parameter $\dzero$. 
Note also that we assumed that $\nu \leq 1$.
The parameter $\dzero$ should not depend on $\nu$ from the viewpoint of this estimate.
\item If $\pk k/\pk{k-1}$-element is employed, we have an estimate of the same order $\Delta t + h^2 + h^k$, but 
it seems to be difficult to remove the dependence on the viscosity,
which is observed in the numerical experiments in Section \ref{sec:numericalResults}.
\item The term $h^2$ appears in \eqref{eq:mainEstimate} because of the introduction of the locally linearized velocity.
\item 
It seems to be difficult to derive the estimate $h^{k+1}$ for the spacial discretization in $\ell^\infty(L^2)$ independent of the viscosity.
Although another type of Stokes projection yields an estimate of order $h^{k+1}$, e.g. \cite{NotsuTabataOseen2015}, the projection error contains the dependence.
de Frutos et al.~\cite{deFrutos2016} derived the same order $h^k$ as ours 
independent of the viscosity
for the backward Euler method or the BDF2 formula with the grad-div stabilization. 
\item Here we do not discuss a estimate for the pressure.
de Frutos et al.~\cite{deFrutos2016} used inf-sup stable elements and show an estimate for the pressure independent of the viscosity. 
However, further discussion seems to be necessary for the pressure-stabilized method.
\item When $k=1$, Notsu and Tabata \cite{NotsuTabataOseen2015} analyzed the pressure-stabilized LG scheme without the locally linearized velocity. 
They derived the estimates
\[
	\| \nabla(u_h -u) \|_{\ell^\infty(L^2)}, ~ \| p_h - p \|_{\ell^2(L^2)} 
	\leq c (\nu^{-1}) (\Delta t + h),
\]
where the constant $c$ depends on $\nu^{-1}$ exponentially.
\end{enumerate}
\end{remark}

Before the proof we prepare some lemmas.
First we recall a discrete version of the Gronwall inequality.

\begin{lemma}[discrete Gronwall inequality] 
\label{lem:discreteGronwall}
Let $a_0$ and $a_1$ be non-negative numbers, $\Delta t \in (0,\frac{1}{2a_0}]$ be a real number, and $\{x^n\}_{n\geq 0}, \{y^n\}_{n\geq 1}$ and $\{b^n\}_{n\geq 1}$ be non-negative sequences.
Suppose
\begin{equation*}
	\frac{x^n-x^{n-1}}{\Delta t} + y^n \leq a_0 x^n + a_1 x^{n-1} + b^n, ~ \forall n \geq 1.
\end{equation*}
Then, it holds that 
\begin{equation*}
	x^n + \Delta t \sum_{i=1}^n y^i \leq \exp [ (2a_0+a_1)n\Delta t ] \left( x^0 + \Delta t \sum_{i=1}^n b^i \right), 
	~ \forall n \geq 1.
\end{equation*}
\end{lemma}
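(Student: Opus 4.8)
\textbf{Proof plan for the discrete Gronwall inequality (Lemma \ref{lem:discreteGronwall}).}

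The plan is to absorb the implicit term $a_0 x^n$ on the right-hand side into the left-hand side, turning the one-step recursion into a clean multiplicative estimate, and then iterate. First I would rewrite the hypothesis by multiplying through by $\Delta t$ to obtain
\begin{equation*}
	x^n - x^{n-1} + \Delta t\, y^n \leq a_0 \Delta t\, x^n + a_1 \Delta t\, x^{n-1} + \Delta t\, b^n.
\end{equation*}
Collecting the $x^n$ terms gives $(1 - a_0 \Delta t) x^n \leq (1 + a_1 \Delta t) x^{n-1} + \Delta t\, b^n - \Delta t\, y^n$. The restriction $\Delta t \leq \frac{1}{2a_0}$ is exactly what guarantees $1 - a_0 \Delta t \geq \tfrac12 > 0$, so we may divide safely and the coefficient of $x^n$ stays bounded away from zero; this is the role that hypothesis plays, and I would flag it explicitly at this point.

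Next I would control the amplification factor. Using $\Delta t \leq \frac{1}{2a_0}$ one has $\frac{1}{1 - a_0 \Delta t} \leq 1 + 2a_0 \Delta t$ (valid since $a_0 \Delta t \leq \tfrac12$). Combining this with $\frac{1 + a_1 \Delta t}{1 - a_0 \Delta t} \leq (1 + 2a_0 \Delta t)(1 + a_1 \Delta t) \leq 1 + (2a_0 + a_1)\Delta t + 2a_0 a_1 (\Delta t)^2$, I would bound the per-step growth factor by $1 + (2a_0 + a_1)\Delta t \cdot(1 + O(\Delta t))$ and, more usefully, by the exponential $\exp[(2a_0 + a_1)\Delta t]$ via the elementary inequality $1 + s \leq e^s$. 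The $y^n$ term on the right is non-negative and appears with a minus sign, so it only helps; I would keep it by noting that dropping it after moving $\Delta t\, y^n / (1 - a_0\Delta t)$ to the left preserves a (possibly weighted) sum of the $y^i$.

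I would then iterate the one-step bound from $n$ down to $0$. Writing $G := \exp[(2a_0 + a_1)\Delta t]$ for the single-step factor, telescoping yields $x^n \leq G^n x^0 + \sum_{i=1}^n G^{n-i+1}\Delta t\, b^i$, and since $G \geq 1$ each factor $G^{n-i+1} \leq G^n$, giving $x^n \leq G^n (x^0 + \Delta t \sum_{i=1}^n b^i)$. The accumulated $y^i$ terms, tracked through the same iteration, contribute the $\Delta t \sum_{i=1}^n y^i$ on the left; here I must be slightly careful that the division by $1 - a_0\Delta t$ attaches a factor to each $y^i$, but since that factor is $\geq 1$ it can be discarded from the left-hand side without weakening the inequality. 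Finally $G^n = \exp[(2a_0 + a_1)n\Delta t]$ gives exactly the claimed bound.

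The only delicate point, and the one I would treat most carefully, is the bookkeeping of the two non-negative sums ($b^i$ and $y^i$) under iteration together with the direction of the inequalities: the $b^i$ terms must be \emph{over}-estimated (hence the uniform replacement of $G^{n-i+1}$ by $G^n$ is in the right direction), whereas the $y^i$ terms must be \emph{under}-estimated (hence discarding the $\geq 1$ weights on the left is legitimate). Beyond that, the argument is a routine telescoping combined with the two elementary inequalities $\frac{1}{1-s} \leq 1 + 2s$ for $s \leq \tfrac12$ and $1 + s \leq e^s$.
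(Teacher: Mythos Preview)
Your proposal is correct and matches the paper's approach: the paper's own proof consists of nothing more than pointing to the two elementary inequalities $\frac{1}{1-a_0\Delta t} \leq 1+2a_0\Delta t \leq \exp(2a_0\Delta t)$, which are exactly the inequalities you isolate, and then leaving the telescoping to the reader. Your write-up simply fills in the iteration details that the paper omits; the bookkeeping remarks about over-estimating the $b^i$ weights by $G^n$ and under-estimating the $y^i$ weights by dropping the factor $\tfrac{1}{1-a_0\Delta t}\geq 1$ are the right checks.
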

Lemma \ref{lem:discreteGronwall} is shown by using the inequalities
\begin{equation*}
	\frac{1}{1-a_0\Delta t} \leq 1+2a_0\Delta t \leq \exp(2a_0\Delta t).
\end{equation*} 

In Lemmas \ref{lemm:interEst}--\ref{lemm:Best} below, the constants $c$ are independent of $h$.

We recall the fundamental properties of Lagrange and Cl\'ement interpolations \cite{Ciarlet,Clement}.

\begin{lemma} \label{lemm:interEst}
Suppose that $\{ \mathcal T_h \}_{h \downarrow 0}$ is a regular family of triangulations of $\mybar \Omega$. \\
(i) Let $\Pi_h^{(m)}: C(\mybar \Omega)^i \to (W_h^{(m)})^i$, $i=1,d$, be the Lagrange interpolation operator to $\pk m$-finite element space for a positive integer $m$.
Then 
there exist positive constants $\aintone \geq 1$ and $c$ independent of $h$ such that
\begin{align}
	|\Pi_h^{(1)} w|_{1,\infty} &\leq \aintone |w|_{1,\infty} ,         \quad \forall w \in W^{1,\infty}(\Omega)^d, \label{eq:aintone}\\
	\| \Pi_h^{(1)} w - w\|_{0,\infty} &\leq c h^{2} |w|_{2,\infty},
		 ~ \forall w \in W^{2,\infty}(\Omega)^d, \notag  \\
	\| \Pi_h^{(m)} w - w \|_{0,K} & \leq ch_K^{m+1} |w|_{m+1,K}, ~ \forall K \in \mathcal T_h, \forall w \in H^{m+1}(K)^i, i=1,d. \notag
\end{align}
(ii) Let $\Pi_{h,C}^{(m)}: L^2(\Omega) \to W_h^{(m)}$ be the Cl\'ement interpolation operator to $\pk m$-finite element space for a positive integer $m$. 
Then there exists a positive constants $c$ such that 
\begin{alignat*}{2}
	| \Pi_{h,C}^{(m)} \psi - \psi |_1 &\leq c h^m |\psi|_{m+1}, && ~ \forall \psi \in H^{m+1}(\Omega), \\
	\biggl( \sum_{K \in \mathcal T_h}| \Pi_{h,C}^{(m)} \psi|_{m, K}^2 \biggr)^{1/2} &\leq c |\psi|_{m}, && ~ \forall \psi \in H^{m}(\Omega).
\end{alignat*}
\end{lemma}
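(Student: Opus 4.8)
The plan is to treat all five estimates as instances of the classical affine-equivalence technique of Ciarlet combined with the Bramble--Hilbert lemma, so that the only role of the regular family $\{\mathcal T_h\}_{h\downarrow 0}$ is to render the resulting constants uniform in $h$. First I would set up the scaling machinery: for each $K\in\mathcal T_h$ let $F_K(\hat x)=B_K\hat x+b_K$ be the affine map from a fixed reference simplex $\hat K$ onto $K$, and recall that shape-regularity gives $\|B_K\|\leq c\,h_K$, $\|B_K^{-1}\|\leq c\,h_K^{-1}$ and $|\det B_K|\sim h_K^d$ with $c$ independent of $K$ and $h$. Together with the standard chain-rule bounds for seminorms under $F_K$, this lets me transfer any estimate proved once on $\hat K$ back to $K$ with an explicit power of $h_K$.

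For part (i) I work on $\hat K$ with the reference Lagrange operator $\hat\Pi^{(m)}$, a bounded linear map on $C(\hat K)$ reproducing $\pk m(\hat K)$. The gradient-stability estimate \eqref{eq:aintone} follows because $\hat\Pi^{(1)}$ reproduces constants: for any constant $\hat c_0$ one has $|\hat\Pi^{(1)}\hat w|_{1,\infty,\hat K}=|\hat\Pi^{(1)}(\hat w-\hat c_0)|_{1,\infty,\hat K}\leq \hat C\,\|\hat w-\hat c_0\|_{0,\infty,\hat K}$, and choosing $\hat c_0$ as a vertex value and invoking a Poincar\'e-type inequality on $\hat K$ bounds the right-hand side by $\hat C\,|\hat w|_{1,\infty,\hat K}$; scaling back produces a constant $\aintone$ independent of $h$, while $\aintone\geq 1$ is forced by the exactness of $\Pi_h^{(1)}$ on affine functions. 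The two error bounds are a direct application of Bramble--Hilbert: since $I-\hat\Pi^{(m)}$ annihilates $\pk m(\hat K)$, one gets $\|\hat w-\hat\Pi^{(m)}\hat w\|_{0,q,\hat K}\leq \hat C\,|\hat w|_{m+1,q,\hat K}$ on $\hat K$ (with $q=\infty,\,m=1$ for the second estimate and $q=2$ for the third), and scaling back introduces exactly the factor $h_K^{m+1}$.

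For part (ii) I recall the construction of $\Pi_{h,C}^{(m)}$ through local $L^2$-projections onto $\pk m$ over the vertex patches $\omega_a=\bigcup\{K:a\in\mybar K\}$, assembled against the nodal basis; this is precisely what lets the operator act on merely $L^2$ (rather than continuous) functions. The error estimate is again Bramble--Hilbert applied patchwise: on each $\omega_a$ the local projection reproduces $\pk m$, so the local error is controlled by $h^{m+1-s}|\psi|_{m+1,\omega_a}$ in the $H^s$-seminorm; summing over elements and using that mesh regularity bounds the number of patches meeting a given $K$ (finite overlap) yields $|\Pi_{h,C}^{(m)}\psi-\psi|_1\leq c\,h^m|\psi|_{m+1}$. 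The broken $H^m$-stability bound follows the same pattern: the local $L^2$-projection is $H^m$-stable on each patch by scaling and the standard inverse estimate, and finite overlap again converts the patchwise bounds into $(\sum_K|\Pi_{h,C}^{(m)}\psi|_{m,K}^2)^{1/2}\leq c\,|\psi|_m$.

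Since every ingredient is classical, there is no genuine analytic obstacle; the only care needed is the bookkeeping that keeps all constants independent of $h$. The two delicate points are (a) verifying $\aintone\geq 1$ and isolating this particular constant, since it reappears in Hypothesis \ref{hypo:dt} and in \eqref{eq:assumptionExact}, and (b) the finite-overlap counting in part (ii), where both the number of vertex patches containing a fixed element and the comparability of $h_K$ across a patch must be bounded solely in terms of the shape-regularity parameter of $\{\mathcal T_h\}$. With these in hand the estimates assemble immediately, which is why I would, as the paper does, ultimately defer the routine computations to Ciarlet \cite{Ciarlet} and Cl\'ement \cite{Clement}.
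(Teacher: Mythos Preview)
Your proposal is correct, and in fact goes further than the paper itself: the paper does not prove Lemma~\ref{lemm:interEst} at all but merely states it as a recall of standard results, citing \cite{Ciarlet} and \cite{Clement}. Your sketch via affine equivalence and Bramble--Hilbert is precisely the classical argument behind those references, and you rightly note at the end that the computations are routine enough to be deferred.
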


When $k\geq 2$, we use the auxiliary $\pk{k-1}$-pressure space $\mybar Q_h$ defined in \eqref{eq:elemTH},
and
$(\pro{z},\pro{r}) \in V_h \times \mybar Q_h$ 
be the Stokes projection of $(z,r) \in H^1_0(\Omega)^d \times L^2_0(\Omega)$ for the fixed viscosity $\nu=1$ defined by 
\begin{subequations}
	\label{eq:stokesproj}
	\begin{align}
	&& (\nabla \pro z, \nabla v_h) - (\nabla \cdot v_h, \pro r) &= (\nabla z, \nabla v_h) - (\nabla \cdot v_h, r), \quad &\forall v_h\in V_h, \\
	&& -(\nabla \cdot \pro z, \mybar q_h)&= -( \nabla \cdot z, \mybar q_h),  &\forall \mybar q_h \in \mybar Q_h. \label{eq:stokesprojb}
	\end{align}
\end{subequations}

\begin{lemma}
	\label{lemm:infsup}
Suppose that $\{ \mathcal T_h \}_{h \downarrow 0}$ is a regular family of triangulations of $\mybar \Omega$ and Hypothesis \ref{hypo:mesh}.
Let $V_h \times \mybar Q_h$ 
be the $\pk k/\pk{k-1}$-finite element space for $k \geq 2$.
Then, 
there exists a positive constant $c$ such that
\begin{equation}
	\|\pro{z}-z\|_{1}, \| \pro{r}-r \|_0 \leq 
	c h^k 
	(|z|_{k+1} + |r|_k),
\end{equation}
where $(\pro{z},\pro{r}) \in V_h \times \mybar Q_h$ is the Stokes projection of $(z,r) \in (H^1_0(\Omega)^d \cap H^{k+1}(\Omega)^d) \times (L^2_0(\Omega) \cap H^k(\Omega))$ defined in \eqref{eq:stokesproj}. 
\end{lemma}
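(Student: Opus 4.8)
Since the Stokes projection \eqref{eq:stokesproj} is defined with the fixed value $\nu = 1$, the viscosity does not enter its definition at all, so the $\nu$-independence of the constant is automatic; the content of the lemma is therefore the standard mixed finite element error estimate for the generalized Taylor--Hood pair, and the plan is to reduce it to Brezzi's saddle-point theory together with interpolation. First I would set $e := \pro z - z$ and $\eta := \pro r - r$. Subtracting the right-hand sides from the left-hand sides in \eqref{eq:stokesproj} shows that $(e,\eta)$ satisfies the discrete Stokes orthogonality
\begin{equation*}
	(\nabla e, \nabla v_h) - (\nabla \cdot v_h, \eta) = 0 \quad \forall v_h \in V_h, \qquad (\nabla \cdot e, \mybar q_h) = 0 \quad \forall \mybar q_h \in \mybar Q_h.
\end{equation*}

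The structural ingredient is the inf-sup (LBB) stability of the $\pk k/\pk{k-1}$ pair: under Hypothesis \ref{hypo:mesh} there is a constant $\beta > 0$ independent of $h$ such that $\sup_{0 \ne v_h \in V_h} (\nabla \cdot v_h, \mybar q_h)/\|v_h\|_1 \geq \beta \|\mybar q_h\|_0$ for all $\mybar q_h \in \mybar Q_h$ (see \cite{2013_Boffi-Brezzi-Fortin}). Together with the coercivity of $(\nabla \cdot, \nabla \cdot)$ on $V_h \subset H^1_0(\Omega)^d$ (Poincar\'e's inequality), Brezzi's theory yields well-posedness of the associated discrete saddle-point problem with a stability constant depending only on $\beta$ and the Poincar\'e constant, hence on neither $\nu$ nor $h$.

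Next I would split $e$ and $\eta$ through interpolants: $e = (\pro z - \Pi_h^{(k)} z) + (\Pi_h^{(k)} z - z) =: \bar e_h + \xi$ with $\bar e_h \in V_h$, and $\eta = (\pro r - r_h) + (r_h - r) =: \bar\eta_h + \theta$ with $\bar\eta_h \in \mybar Q_h$, where $r_h \in \mybar Q_h$ is a suitable approximation of $r$ (a Cl\'ement interpolation followed by a mean-value correction so that $r_h$ has zero mean). Using the orthogonality relations, the pair $(\bar e_h, \bar\eta_h) \in V_h \times \mybar Q_h$ solves the discrete generalized saddle-point problem with data $v_h \mapsto -(\nabla \xi, \nabla v_h) + (\nabla \cdot v_h, \theta)$ and $\mybar q_h \mapsto -(\nabla \cdot \xi, \mybar q_h)$, whose dual norms are bounded by $c(\|\xi\|_1 + \|\theta\|_0)$ and $c\|\xi\|_1$, respectively. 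The stability estimate of the previous paragraph then gives $\|\bar e_h\|_1 + \|\bar\eta_h\|_0 \leq c(\|\xi\|_1 + \|\theta\|_0)$, and the triangle inequality yields $\|e\|_1, \|\eta\|_0 \leq c(\|\xi\|_1 + \|\theta\|_0)$. Invoking the interpolation estimates of Lemma \ref{lemm:interEst} (the $H^1$ Lagrange bound, obtained by squaring and summing the listed local estimates together with the analogous $H^1$-seminorm bounds, cf.~\cite{Ciarlet}, and the Cl\'ement bound), one gets $\|\xi\|_1 \leq c h^k |z|_{k+1}$ and $\|\theta\|_0 \leq c h^k |r|_k$, which combine to give the claim.

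The only point requiring genuine care is the invocation of inf-sup stability of the generalized Taylor--Hood element for the mesh family at hand; this is exactly where Hypothesis \ref{hypo:mesh} is used, and everything else is a routine application of Brezzi's theory and standard interpolation. A minor bookkeeping remark is that the $H^1$ Lagrange interpolation estimate used in the last step is not literally tabulated in Lemma \ref{lemm:interEst} (only the $L^2$ version is), but it is classical, and the mean-value correction for $r_h$ only costs a harmless constant factor.
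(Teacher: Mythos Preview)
Your proposal is correct and follows the same route the paper indicates: the paper itself gives no detailed proof but simply states that the estimate is a direct consequence of the inf-sup stability of the $\pk k/\pk{k-1}$ element \cite{2013_Boffi-Brezzi-Fortin}, with $\nu$-independence coming from the fixed viscosity in \eqref{eq:stokesproj}. You have spelled out precisely this standard Brezzi argument, including correctly identifying Hypothesis~\ref{hypo:mesh} as the place where the inf-sup constant is secured.
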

This estimate is a direct consequence of the inf-sup stability for the $\pk k / \pk{k-1}$-element \cite{2013_Boffi-Brezzi-Fortin}. 
Since in \eqref{eq:stokesproj} the fixed viscosity is used,
we have the estimate of the projection independent of the viscosity.

\begin{lemma} \label{lemm:Best}
Suppose that $z \in H^{k+1}(\Omega)^d$ satisfies $\nabla \cdot z =0$,
$\{ \mathcal T_h \}_{h \downarrow 0}$ is a regular family of triangulations of $\mybar \Omega$
and Hypothesis \ref{hypo:mesh}.
Let $V_h \times Q_h$ be the $\pk k / \pk k $-finite element space for $k\geq 1$.
Let $z_h \in V_h$ be the Lagrange interpolation of $z$ when $k=1$, or
the first component of the Stokes projection of $(z,0)$ defined in \eqref{eq:stokesproj} when $k \geq 2$.
Then, there exists a positive constant $c$ such that
\begin{equation} \label{eq:lemmBest}
	b(z_h, q_h) \leq ch^k |z|_{k+1} |q_h|_h, ~ \forall q_h \in Q_h,
\end{equation}
where the semi-norm $|\cdot|_h$ is defined in \eqref{eq:semi-norm}
\end{lemma}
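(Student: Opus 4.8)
The plan is to split into the cases $k=1$ and $k\ge 2$, and in both to begin by using $\nabla\cdot z=0$ to write $b(z_h,q_h)=b(z_h-z,q_h)=-(\nabla\cdot(z_h-z),q_h)$, and then to handle $q_h$ by subtracting a suitable piecewise polynomial of degree $k-1$.

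In the case $k=1$ the approximation $z_h=\Pi_h^{(1)}z$ is local, so I would integrate by parts: since $z_h-z\in H^1_0(\Omega)^d$ and $q_h\in H^1(\Omega)$, we get $b(z_h,q_h)=(z_h-z,\nabla q_h)=\sum_{K\in\mathcal T_h}(z_h-z,\nabla q_h)_K$. Applying the Cauchy--Schwarz inequality on each $K$, then the local Lagrange estimate $\|\Pi_h^{(1)}z-z\|_{0,K}\le ch_K^2|z|_{2,K}$ from Lemma~\ref{lemm:interEst}(i), and finally the Cauchy--Schwarz inequality in the sum over $K$ together with $h_K\le h$, yields
\[
b(z_h,q_h)\le c\sum_{K\in\mathcal T_h}h_K^2|z|_{2,K}|q_h|_{1,K}\le c\Bigl(\sum_{K\in\mathcal T_h}h_K^2|z|_{2,K}^2\Bigr)^{1/2}\Bigl(\sum_{K\in\mathcal T_h}h_K^2|q_h|_{1,K}^2\Bigr)^{1/2}\le ch\,|z|_{2}\,|q_h|_h,
\]
which is \eqref{eq:lemmBest} for $k=1$. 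I would stress the use of an elementwise rather than a global Cauchy--Schwarz here, so that the local mesh sizes $h_K$ match the weights defining $|\cdot|_h$ for a merely regular family.

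In the case $k\ge 2$ the map $z_h$ (the first component of the Stokes projection of $(z,0)$) is nonlocal, and I would deliberately avoid integrating by parts, since that would force me to bound $q_h$ minus an interpolant in the $H^1$-seminorm by $|q_h|_h$, which is false by a power of $h$. Instead I would exploit the divergence orthogonality built into the Stokes projection: by \eqref{eq:stokesprojb} with $r=0$ and $\nabla\cdot z=0$ we have $(\nabla\cdot z_h,\mybar q_h)=0$ for all $\mybar q_h\in\mybar Q_h=W_h^{(k-1)}\cap L^2_0(\Omega)$, while $z_h\in H^1_0(\Omega)^d$ gives $(\nabla\cdot z_h,1)=0$; splitting an arbitrary element of $W_h^{(k-1)}$ into its mean and its mean-free part then shows $(\nabla\cdot z_h,\mybar q_h)=0$ for every $\mybar q_h\in W_h^{(k-1)}$. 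Taking $\mybar q_h=\Pi_h^{(k-1)}q_h\in W_h^{(k-1)}$ (well defined since $q_h\in C(\mybar\Omega)$), I obtain
\[
b(z_h,q_h)=-(\nabla\cdot z_h,\,q_h-\Pi_h^{(k-1)}q_h)\le\|\nabla\cdot z_h\|_0\Bigl(\sum_{K\in\mathcal T_h}\|q_h-\Pi_h^{(k-1)}q_h\|_{0,K}^2\Bigr)^{1/2}.
\]
Since $q_h$ is a polynomial of degree $k$ on each $K$, Lemma~\ref{lemm:interEst}(i) with $m=k-1$ gives $\|q_h-\Pi_h^{(k-1)}q_h\|_{0,K}\le ch_K^k|q_h|_{k,K}$, so the second factor is at most $c|q_h|_h$; and since $\nabla\cdot z=0$, the first factor satisfies $\|\nabla\cdot z_h\|_0=\|\nabla\cdot(z_h-z)\|_0\le c|z_h-z|_1\le ch^k|z|_{k+1}$ by Lemma~\ref{lemm:infsup} (with $r=0$). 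Multiplying these two bounds gives \eqref{eq:lemmBest}.

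The step I expect to be the main obstacle is making the powers of $h$ match in the case $k\ge 2$: the natural first attempt via integration by parts loses a factor of $h$, and the fix is to keep $\nabla\cdot z_h$ in $L^2$---where it is controlled, uniformly in $\nu$, by the Stokes projection estimate of Lemma~\ref{lemm:infsup}---and to transfer every mesh weight onto $q_h$ through the local $L^2$ interpolation estimate. The only other delicate point is the (elementary) passage from orthogonality against $\mybar Q_h$ to orthogonality against all of $W_h^{(k-1)}$, which relies on $z_h$ being $H^1_0$-conforming.
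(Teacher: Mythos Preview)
Your proof is correct and follows essentially the same route as the paper: integrate by parts and use the local Lagrange estimate for $k=1$; for $k\ge 2$, subtract a $\pk{k-1}$ interpolant of $q_h$, use the divergence orthogonality from the Stokes projection, and combine the local interpolation bound $\|q_h-\Pi_h^{(k-1)}q_h\|_{0,K}\le ch_K^k|q_h|_{k,K}$ with Lemma~\ref{lemm:infsup}. The only cosmetic difference is that the paper keeps $\mybar\Pi_h^{(k-1)}q_h\in\mybar Q_h$ by subtracting its mean and then notes $\|q_h-\mybar\Pi_h^{(k-1)}q_h\|_0\le\|q_h-\Pi_h^{(k-1)}q_h\|_0$, whereas you instead observe $(\nabla\cdot z_h,1)=0$ to extend the orthogonality to all of $W_h^{(k-1)}$; both devices handle the same mean-zero technicality.
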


\begin{proof}
When $k=1$, by using $\nabla \cdot z=0$, the integration by part and Lemma \ref{lemm:interEst}, we get the estimate \eqref{eq:lemmBest} as follows:
\begin{equation*}
\begin{split}
	b(z_h,q_h) &= b(z_h-z,q_h) = (z_h-z, \nabla q_h) \leq c \sum_{K \in \mathcal T_h} h_K^2 |z|_{2,K} \| \nabla q_h \|_{0,K} \\
	&\leq ch |z|_{2} |q_h|_h.
\end{split}
\end{equation*}
When $k \geq 2$, it holds that from \eqref{eq:stokesprojb} and Lemma \ref{lemm:interEst}
\[
	b(z_h, \mybar q_h) = b(z, \mybar q_h) = 0, ~ \forall \mybar q_h \in \mybar Q_h, \text{ and }
\]
\[
	\|q_h - \mybar \Pi_h^{(k-1)} q_h  \|_0 \leq \|q_h - \Pi_h^{(k-1)} q_h  \|_0 \leq c \left( \sum_{K \in \mathcal T_h} h_K^{2k} |q_h|_{k,K}^2 \right)^{1/2},
\]
where $\mybar \Pi_h^{(k-1)} q_h \in \mybar Q_h$ is the Lagrange interpolation of $q_h$ with the correction of the constant so that $\mybar \Pi_h^{(k-1)} q_h \in L^2_0(\Omega)$.
By Lemma \ref{lemm:infsup}
we get the estimate \eqref{eq:lemmBest} as follows:
\begin{equation*}
\begin{split}
	& b(z_h, q_h) = b(z_h, q_h - \mybar \Pi_h^{(k-1)} q_h) 
	\\
	\leq & \| \nabla \cdot (z_h - z)\|_0 \|q_h - \mybar \Pi_h^{(k-1)} q_h  \|_0 
	\leq ch^k |z|_{k+1} |q_h|_h.
\end{split}
\end{equation*}
\end{proof}

We now begin the proof of Theorem,
where we also refer to Lemmas \ref{lemm:bijective_1}--\ref{lemm:estR3pre} in Appendix for properties of the mapping $X_1(\cdot)$. 

\begin{proof}[Proof of Theorem] 
Here we simply write  $X_{1h}^{n-1}=X_{1}(\Pi_h^{(1)} w^{n-1})$.
We use $c$ to represent a generic positive constant that 
is independent of $\nu$, $\Delta t$ and $h$ but depends on Sobolev norms $\|u\|_{C(H^1)}$ and $\|w\|_{C(W^{2,\infty})}$, and
may take a different value at each occurrence.
	
Let $z_h(t) \in V_h$ be, as in Lemma \ref{lemm:Best}, the Lagrange interpolation of $u(t)$ when $k=1$, or
the first component of the Stokes projection of $(u(t),0)$ defined in \eqref{eq:stokesproj} when $k \geq 2$, 
and let $r_h(t) \in Q_h$ be the Cl\'ement interpolation of $p(t)$ with the correction of the constant so that $r_h(t) \in L^2_0(\Omega)$.
We define the error terms by
\[
	(e_h^n, \eps_h^n):=(u_h^n - z_h^n, p_h^n - r_h^n), ~ \eta(t) := u(t) - z_h(t).
\]
From \eqref{eq:schemea}, \eqref{eq:Osweaka} with $t=t^n$ and $v=v_h$, 
and \eqref{eq:schemeb},
we have an error equations in $(e_h^n, \eps_h^n)$:

\begin{subequations}\label{eq:errorWhole}
\begin{align} \label{eq:errorMain}
	& \left(\frac{e_h^n - e_h^{n-1} \circ X_{1h}^{n-1}}{\Delta t}, v_h\right) + a(e_h^n,v_h)+b(v_h,\eps_h^n) \notag \\  
	& = (R^n, v_h) + a(\eta^n, v_h) + b(v_h, p^n - r_h^n), &\forall v_h \in V_h,\\
	& b(e_h^n, q_h) - \dzero \ch(\eps_h^n, q_h)  = -b(z_h^n, q_h) + \dzero \ch(r_h^n, q_h), &\forall q_h \in Q_h, \label{errorMainb}
	\end{align}
\end{subequations}
for $n=1,\dots,N_T$, where $R^n:=R^n_1+R^n_2+R^n_3$,
\begin{equation}\label{eq:defR}
\begin{split}
	R_{1}^n & := \frac{\partial u^n}{\partial t} + (w^n \cdot \nabla)u^n 
	-\frac{u^n-u^{n-1}\circ X_1(w^{n-1})}{\Delta t}, \\
	R_{2}^n & := \frac{u^{n-1}\circ X_{1h}^{n-1} - u^{n-1} \circ X_1(w^{n-1})}{\Delta t}, \\
	R_{3}^n & := \frac{\eta^n-\eta^{n-1}\circ X_{1h}^{n-1}}{\Delta t}.
\end{split}
\end{equation}
Substituting $(v_h, q_h) = (e_h^n, \eps_h^n)$ in \eqref{eq:errorWhole} 
and using the identity $(a-b)a = 1/2(a^2-b^2+(a-b)^2)$
yields
\begin{equation} \label{eq:erroreq2}
\begin{split}
	&\frac{1}{2\Delta t} \bigl( \| e_h^n \|_0^2 - \| e_h^{n-1}\circ X_{1h}^{n-1}) \|_0^2 + \| e_h^n - e_h^{n-1} \circ X_{1h}^{n-1} \|_0^2 \bigr) 
	+ \nu \| \nabla e_h^n \|_0^2 + \dzero |\eps_h^n|_h^2 \\
	&= (R^n,e_h^n) + a(\eta^n, e_h^n) + b(e_h^n, p^n - r_h^n) + b(z_h^n, \eps_h^n) - \dzero \ch(r_h^n, \eps_h^n).
\end{split}
\end{equation}

We now estimate the terms in \eqref{eq:erroreq2}.
With Hypothesis \ref{hypo:dt} and the properties
\begin{equation}
	\Pi_h^{(1)} w^{n-1} \in W^{1,\infty}_0(\Omega)^d \quad \text{and} \quad |\Pi_h^{(1)} w^{n-1}|_{1,\infty} \Delta t \leq \aintone |w^{n-1}|_{1,\infty} \Delta t \leq 1/4,
\end{equation}
we use Lemma \ref{lemm:bijective_1} in Appendix to have
\begin{equation} \label{eq:errorEqRT}
	\| e_h^{n-1}\circ X_{1h}^{n-1} \|_0^2 \leq (1+c\Delta t) \| e_h^{n-1} \|_0^2.
\end{equation}

After applying Schwarz's inequality to $(R^n, e_h^n)$, 
we estimate $\|R_i^n\|_0$, $i=1,2,3$.
By Lemma \ref{lemm:estR1} in Appendix,
\begin{equation} \label{eq:errorEqResest1}
	\|R_1^n\|_0 \leq c \sqrt{\Delta t} \biggl( \|u\|_{Z^2(t^{n-1},t^n)} + \left\| \frac{\partial w}{\partial t} \right\|_{L^2(t^{n-1}, t^n; L^\infty)} \biggr).
\end{equation} 
By Lemma \ref{lemm:twofunc} in Appendix with $q=2$, $p=\infty$, $p'=1$, $w_1=\Pi_h^{(1)} w^{n-1}$, $w_2=w^{n-1}$ and $v=u^{n-1}$, and by Lemma \ref{lemm:interEst},
\begin{equation} \label{eq:errorEqResest2}
	\|R_2^n\|_0 \leq c \| \Pi_h^{(1)} w^{n-1} - w^{n-1} \|_{0,\infty} 
	\leq c h^2.
\end{equation}
By Lemma \ref{lemm:estR3pre} in Appendix with $v=\eta$ and $w^*=\Pi_h^{(1)} w^{n-1}$, and by Lemma \ref{lemm:interEst} or \ref{lemm:infsup},
\begin{equation} \label{eq:errorEqResest3}
\begin{split}
	\|R_3^n\|_0 
	&\leq \frac{c}{\sqrt{\Delta t}} \biggl( \left\| \frac{\partial \eta}{\partial t} \right\|_{L^2(t^{n-1}, t^n; L^2)} + \|\nabla \eta\|_{L^2(t^{n-1}, t^n; L^2)} \biggr) \\
	& \leq \frac{c h^k}{\sqrt{\Delta t}}  \biggl( \| u \|_{H^1(t^{n-1}, t^n; H^{k+1})} 
	\biggr).
\end{split}
\end{equation}

An estimate for $a$ is easily obtained by Lemma \ref{lemm:interEst} or \ref{lemm:infsup}:
\begin{equation} \label{eq:errorEqaest}
	a(\eta^n, e_h^n) \leq \frac{\nu}{2} \|\nabla \eta^n\|_{0}^2 + \frac{\nu}{2} \|\nabla e_h^n\|_{0}^2 
	\leq ch^{2k} |u^n|_{k+1}^2 + \frac{\nu}{2} \|\nabla e_h^n\|_{0}^2,
\end{equation}
where we note that we assumed $\nu \leq 1$.
The integration by part and Lemma \ref{lemm:interEst}-(ii) yields
\begin{equation} \label{eq:errorEqBAppest}
\begin{split}
	b(e_h^n, p^n - r_h^n) &= (e_h^n, \nabla(p^n - r_h^n)) 
	\leq \frac{1}{2} \|e_h^n\|_0^2 + \frac{1}{2} \| \nabla (p^n - r_h^n) \|_0^2 \\
	&\leq \frac{1}{2} \|e_h^n\|_0^2 + ch^{2k} |p^n|_{k+1}^2.
\end{split}
\end{equation}
By Lemma \ref{lemm:Best},
\begin{equation} \label{eq:errorEqBStabest}
\begin{split}
	b(z_h^n, \eps_h^n) \leq  ch^k |u^n|_{k+1} |\eps_h^n|_h
	\leq \frac{c}{\dzero} h^{2k} |u^n|_{k+1}^2 + \frac{\dzero}{2}  |\eps_h^n|_h^2.
\end{split}
\end{equation}
By using stability of Cl\'ement interpolation (Lemma \ref{lemm:interEst}-(ii)),
\begin{equation} \label{eq:errorEqCest}
	- \dzero \ch(r_h^n, \eps_h^n) \leq \dzero |r_h^n|_h |\eps_h^n|_h 
	\leq \frac{\dzero}{2} |r_h^n|_h^2 + \frac{\dzero}{2} |\eps_h^n|_h^2 
	\leq \frac{c\dzero}{2} h^{2k} |p^n|_{k}^2 + \frac{\dzero}{2} |\eps_h^n|_h^2.
\end{equation}
Gathering the estimates \eqref{eq:errorEqRT}--\eqref{eq:errorEqCest}, from \eqref{eq:erroreq2} we get
\begin{align*}
	&\frac{1}{2\Delta t}(\|e_h^n\|_0^2 - \|e_h^{n-1}\|_0^2) + \frac{\nu}{2} \|\nabla e_h^n \|_0^2 
	\leq c \|e_h^{n-1}\|_0^2 + \eps_0 \|e_h^n\|_0^2 \\
	&+ c \biggl\{ \Delta t \Bigl( \|u\|_{Z^2(t^{n-1},t^n)}^2 + \left\| \frac{\partial w}{\partial t} \right\|_{L^2(t^{n-1}, t^n; L^\infty)}^2 \Bigr) 
	\\ & \qquad 
	+ \frac{h^{2k}}{\Delta t} \|  u \|_{H^1(t^{n-1}, t^n; H^{k+1})}^2 + h^4 
	+ h^{2k}[(1+\dzero^{-1})  \|u^n\|_{k+1}^2 + (1+\dzero)\|p^n\|_{k+1}^2 ] \biggr\},
\end{align*}
where the positive constant $\eps_0$ has been chosen so that $\Delta t_0 \leq \frac{1}{4\eps_0}$.
We now apply the discrete Gronwall's inequality (Lemma \ref{lem:discreteGronwall}) 
to get for $1\leq n \leq N_T$
\begin{align*}
	& \|e_h^n\|_0^2 + \nu \Delta t \sum_{j=1}^n \|\nabla e_h^j\|_0^2 \\
	\leq & c \exp\{ c' n\Delta t \}(\Delta t^2 + h^{2k} + h^4) 
	\biggl[ \left\| \frac{\partial w}{\partial t} \right\|_{L^2(0, t^n; L^\infty)}^2 + \|u\|_{Z^2(0,t^n)}^2 + \|  u \|_{H^1(0, t^n; H^{k+1})}^2 \\
	&+ (1+\dzero^{-1}) \Delta t \sum_{j=1}^n \|u^j\|_{k+1}^2 + (1+\dzero) \Delta t \sum_{j=1}^n \|p^j\|_{k+1}^2 
	+ \| u^0 \|_{k+1}^2
	\biggr],
\end{align*}
where we have used Hypothesis \ref{hypo:initial} for the initial value.
We have the conclusion by the triangle inequalities,
\begin{align*}
	\| u_h - u \|_{\ell^\infty(L^2)} &\leq \|e_h\|_{\ell^\infty(L^2)} + \|\eta\|_{\ell^\infty(L^2)} \\
	&\leq \|e_h\|_{\ell^\infty(L^2)} + ch^k \| u \|_{\ell^\infty(H^{k+1})}, \\
	\| \nabla (u_h - u) \|_{\ell^2(L^2)} &\leq \| \nabla e_h\|_{\ell^2(L^2)} + \|\nabla \eta\|_{\ell^2(L^2)} \\
	&\leq \|\nabla e_h\|_{\ell^2(L^2)} + ch^k \| u \|_{\ell^2(H^{k+1})}.
\end{align*}
\end{proof}

\begin{remark}
Our analysis need that $Q_h$ is $\pk k$-finite element space in the estimate \eqref{eq:errorEqBAppest} to have $O(h^k)$ in $H^1$-norm.
\end{remark}

\section{Numerical results} \label{sec:numericalResults}
We consider test problems given by manufactured solutions in $d=2$.
We compare Schemes {\schemeone} and {\schemetwo} with $k=2$
for the Oseen problem \eqref{Os}
to show higher accuracy of Scheme {\schemetwo} for small viscosity and large pressures.
We additionally show numerical results of the Navier--Stokes problem, which is given by replacing $w$ by the unknown $u$ in \eqref{Os}.
The corresponding Schemes {\schemethree} and {\schemefour} are given by replacing $w^{n-1}$ by $u_h^{n-1}$ in Schemes {\schemeone} and {\schemetwo}.

\begin{scheme3}
	Let $u_h^0 \in V_h$ be an approximation of $u^0$.
	Find $\{(u_h^n, p_h^n)\}_{n=1}^{N_T} \subset V_h \times \mybar Q_h$ such that
	\begin{alignat*}{2}
	\biggl( \frac{u_h^{n} - u_h^{n-1}\circ X_{1}(\Pi_h^{(1)} u_h^{n-1})}{\Delta t}, v_h \biggr) + a(u_h^n, v_h) + & b(v_h, p_h^n) \qquad \notag \\ & = (f^n,v_h),  ~ && \forall v_h \in V_h, 
	\\
	b(u_h^n, q_h) &= 0, ~ && \forall q_h \in \mybar Q_h.
	\end{alignat*}
\end{scheme3}
\begin{scheme4}
	Let $u_h^0 \in V_h$ be an approximation of $u^0$.
	Find $\{(u_h^n, p_h^n)\}_{n=1}^{N_T} \subset V_h \times Q_h$ such that
	\begin{subequations} \label{eq:schemeNS}
		\begin{align*}
		\biggl( \frac{u_h^{n} - u_h^{n-1}\circ X_{1}(\Pi_h^{(1)} u_h^{n-1})}{\Delta t}, v_h \biggr) + a(u_h^n, v_h) + b(v_h, p_h^n) \qquad \notag \\ = (f^n,v_h),  ~ \forall v_h \in V_h, \\
		b(u_h^n, q_h) - \dzero \ch(p_h^n, q_h) = 0, ~ \forall q_h \in Q_h.
		\end{align*}
	\end{subequations}
\end{scheme4}
In the four schemes
we set the initial value as $u_h^0=\Pi_h^{(2)} u^0$, where $\Pi_h^{(2)}$ is the interpolation operator to the $\pk 2$-element.

\begin{figure}
	\centering
	\includegraphics[width=1.5in]{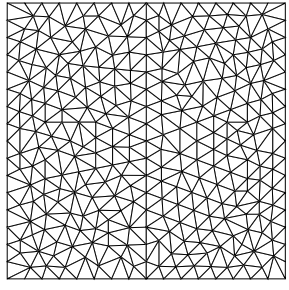}
	\caption{The triangulation of $\mybar \Omega$ for $N=16$ used in Examples 
	\ref{exam:manusol} and \ref{exam:forceNS}.
	}
	\label{fig:irsq_renum}
\end{figure}

\begin{exam} \label{exam:manusol}
We consider the Oseen problem and the Navier--Stokes problem. 
Let $\Omega = (0,1)^2$, $T=1$.
The functions $f$ and $u^0$ are defined so that the exact solution is 
\begin{equation} \label{eq:exactSolForm}
\begin{split}
	u_1(x,t)&=\phi(x_1,x_2,t),\\
	u_2(x,t)&= - \phi(x_2,x_1,t),\\
	p(x,t)&= C_p \sin(\pi(x_1 + 2x_2) + 1 + t),
\end{split}
\end{equation}
where \[\phi(a,b,t) = -\sin(\pi a)^2 \sin(\pi b) \{\sin(\pi(a + t)) + 3\sin(\pi(a + 2b + t))\}.\]
For the Oseen problem we set $w:=u$.
We consider the four cases $\nu=10^{-2}$, $10^{-4}$, $C_p=1$, $10$.
\end{exam}

For triangulations of domains FreeFem++ \cite{FreeFemCite} is used.
Let $N=16, 23, 32 ,45$ and $64$ be the division number of each side of $\mybar \Omega$, and
we set $h = 1/N$. 
Figure \ref{fig:irsq_renum} shows the triangulation of $\mybar \Omega$ when $N=16$. 
The time increment $\Delta t$ is set to be $\Delta t=h^2$ 
so that we can observe the convergence behavior of order $h^2$. 
The purpose of the choice $\Delta t=O(h^2)$ is to examine the theoretical convergence order, but it is not based on the stability condition.
We set the stabilization parameter $\dzero=10^{-1}$ for Schemes {\schemetwo} and {\schemefour}.

The relative error $E_X$ is defined by
\begin{equation*}
E_X(\phi) = \frac{\|\Pi_h^{(m)} \phi-\phi_h \|_X}
{\|\Pi_h^{(m)} \phi\|_X},
\end{equation*}
for $\phi=u$ in $X=\ell^\infty(L^2)$ and $\ell^2(H_0^1)$ with $m=2$, and for $\phi=p$ in $X=\ell^2(L^2)$ with $m=l$ when $\pk 2/ \pk l$-element is used.
Here $\Pi_h^{(m)}$ is the Lagrange interpolation operator to the $\pk m$-finite element space.
Table \ref{table:symbol} shows the symbols used in graphs. 
Since every graph of the relative error $E_X$ versus $h$ is depicted in the logarithmic scale, the slope corresponds to the convergence order. 

\begin{table}
	\caption{Symbols used in Example \ref{exam:manusol}.}
	\centering
	\begin{tabular}{cccc}
		\hline
		$\phi$ 				& $u$ 	& $u$ 	& $p$ \\
		$X$ 				& $\ell^\infty(L^2)$& $\ell^2(H^1_0)$ & $\ell^2(L^2)$ \\
		\hline
		\hline
		TH	 	& \mone & \mtwo & \mthree \\
		Pstab	& \mfour& \mfive& \msix \\
		\hline
	\end{tabular}
	\label{table:symbol}
\end{table}

\paragraph{Case (a)}
Let $C_p=1$ in \eqref{eq:exactSolForm}.
We consider the Oseen problem
and compare Schemes {\schemeone} and \schemetwo.

\begin{figure}
\centering
\includegraphics[height=3in]{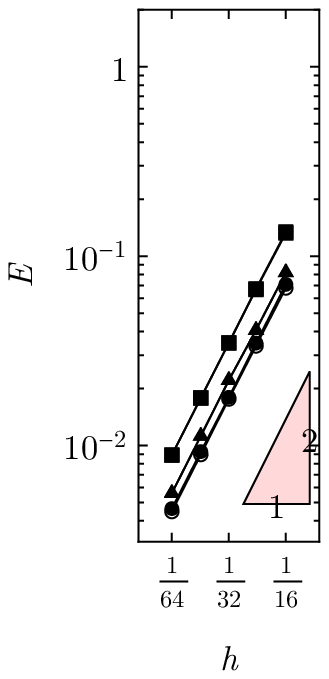} \hspace{10mm}
\includegraphics[height=3in]{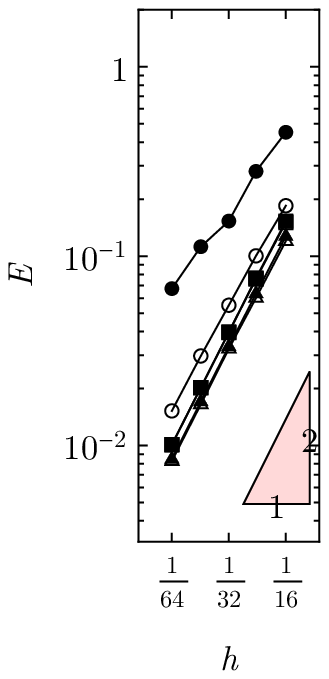}
\caption{Case (a). Relative errors versus $h$ for $\nu=10^{-2}$ (left) and $\nu=10^{-4}$ (right).}
\label{fig:caseA_he}
\end{figure}

Figure \ref{fig:caseA_he} shows the graphs of the errors $E_{\ell^\infty(L^2)}(u)$ (\mone,\mfour), $E_{\ell^2(H^1_0)}(u)$ (\mtwo,\mfive) and $E_{\ell^2(L^2)}(p)$ (\mthree,\msix) versus $h$.
When $\nu=10^{-2}$, all convergence orders are almost two and there are no significant differences in both schemes.

When $\nu=10^{-4}$, the convergence orders of $E_{\ell^\infty(L^2)}(u)$ (\mone,\mfour) are almost two in both schemes and there are no significant differences. 
The values of them are almost 1.5 times larger than those for $\nu=10^{-2}$.
We also get results for $\nu=10^{-6}$, whose graph is omitted here, to observe that increases of the errors compared to $\nu=10^{-4}$ are less than two percent.
The convergence order of $E_{\ell^2(H^1_0)}(u)$ in Scheme {\schemeone} (\mtwo) is less than two, 
while the convergence order is almost two in {\schemetwo} (\mfive) and the value for $N=64$ is four times smaller than that in Scheme {\schemeone}.
In order to obtain the convergence order two in Scheme {\schemeone}, finer meshes seem to be necessary.
The convergence order of the error $E_{\ell^2(L^2)}(p)$ (\mthree,\msix) is almost two in both schemes and the values are almost same as those for $\nu=10^{-4}$.
However, we do not have theoretical estimates for $p$ independent of the viscosity.

We observe that, although in Case (a) there are no significant differences between the both schemes in the errors $E_{\ell^\infty(L^2)}(u)$ (\mone,\mfour), Scheme {\schemetwo} shows higher accuracy for $\nu=10^{-4}$ in the errors $E_{\ell^2(H^1_0)}(u)$ (\mfive).

We consider the problem where the pressure value is larger.
\paragraph{Case (b)}
Let $C_p=10$ in \eqref{eq:exactSolForm}.
We consider the Oseen problem
and compare Schemes {\schemeone} and \schemetwo.

\begin{figure}
\centering
\includegraphics[height=3in]{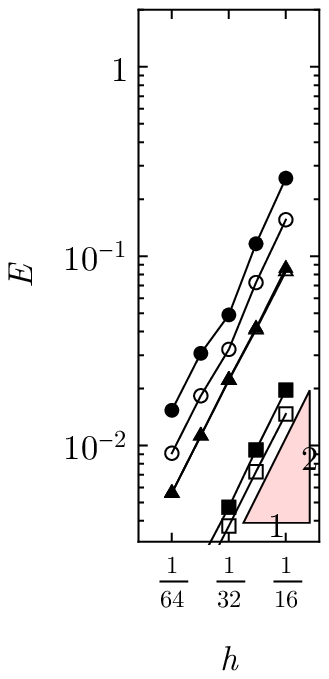} \hspace{10mm}
\includegraphics[height=3in]{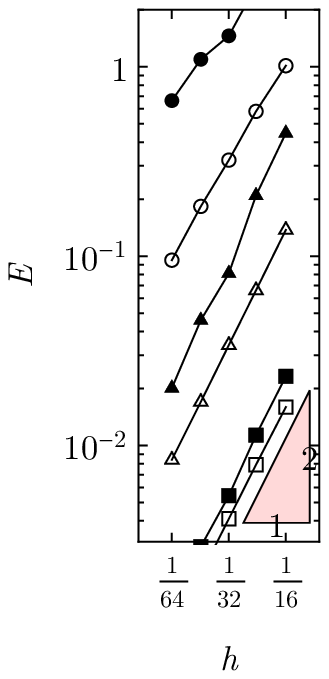}
\caption{Case (b). Relative errors versus $h$ for $\nu=10^{-2}$ (left) and $\nu=10^{-4}$ (right).}
\label{fig:caseOsL10_he}
\end{figure}

Figure \ref{fig:caseOsL10_he} shows the graphs of the errors.
When $\nu=10^{-2}$, the values of $E_{\ell^\infty(L^2)}(u)$ (\mone,\mfour) are almost same as Case (a).
We observe differences in $E_{\ell^2(H^1_0)}(u)$ in the two schemes. 
The values of errors in Scheme {\schemeone} (\mtwo) are about 1.5 times as large as those in Scheme {\schemetwo} (\mfive), and the values in the both schemes are about two to three times as large as in Case (a).
The values of relative errors $E_{\ell^2(L^2)}(p)$ (\mthree,\msix) are, conversely, smaller than those in Case (a).

When $\nu=10^{-4}$, differences of the schemes appear more clearly in $E_{\ell^\infty(L^2)}(u)$ and $E_{\ell^2(H^1_0)}(u)$ than Case (a).
The values of $E_{\ell^\infty(L^2)}(u)$ in Scheme {\schemeone} (\mone) are almost two to three times as large as those in Scheme {\schemetwo} (\mfour).
The values in Scheme {\schemetwo} (\mfour) are almost 1.5 times larger than those for $\nu=10^{-2}$.
We also get results for $\nu=10^{-6}$, whose graph is omitted here, to observe that increases of the errors compared to $\nu=10^{-4}$ are less than 15 percent.
For $N=16$ and $23$ the values of $E_{\ell^2(H^1_0)}(u)$ in Scheme {\schemeone} (\mtwo) are 
too large to be plotted in the graph, and for $N=32,45$ and $64$ the values are
almost four to seven times as large as those in Scheme {\schemetwo} (\mfive).
The values of relative errors $E_{\ell^2(L^2)}(p)$ (\mthree,\msix) are, conversely, smaller than those in Case (a).

We additionally consider the Navier--Stokes problems.

\paragraph{Case (c)}
Let $C_p=1$ in \eqref{eq:exactSolForm}.
We consider the Navier--Stokes problem
and compare Schemes {\schemethree} and \schemefour.

\begin{figure}
\centering
\includegraphics[height=3in]{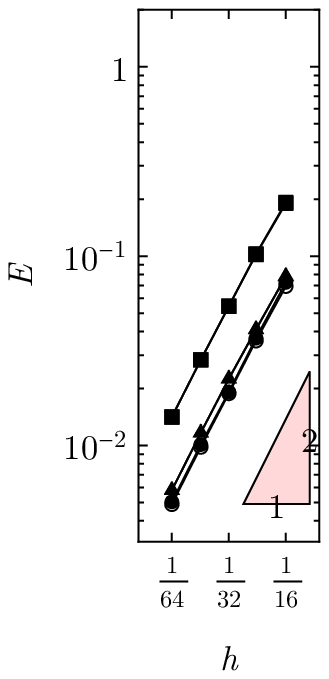} \hspace{10mm}
\includegraphics[height=3in]{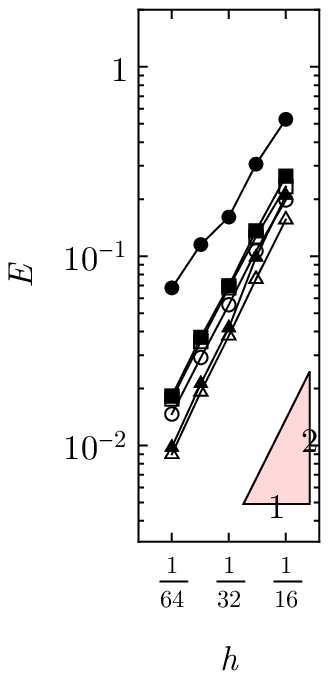}
\caption{Case (c). Relative errors versus $h$ for $\nu=10^{-2}$ (left) and $\nu=10^{-4}$ (right).}
\label{fig:caseNSL1_he}
\end{figure}

Although in \cite{TabataUchiumiNSA} numerical results of Scheme {\schemethree} have already shown, we display them for the sake of completeness.
Figure \ref{fig:caseNSL1_he} shows the graphs of the errors.
We observe the almost same behavior of the errors $E_{\ell^\infty(L^2)}(u)$ (\mone,\mfour) and $E_{\ell^2(H^1_0)}(u)$ (\mtwo,\mfive) as in Case (a) while the values of $E_{\ell^2(L^2)}(p)$ (\mthree,\msix) are almost 1.5 to 2 times as large as in Case (a).

\paragraph{Case (d)}
Let $C_p=10$ in \eqref{eq:exactSolForm}.
We consider the Navier--Stokes problem
and compare Schemes {\schemethree} and \schemefour.

\begin{figure}
\centering
\includegraphics[height=3in]{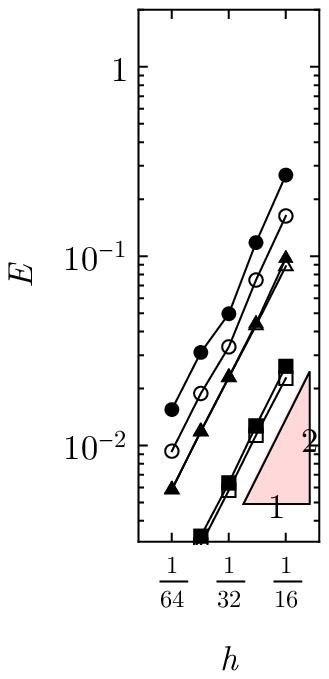} \hspace{10mm}
\includegraphics[height=3in]{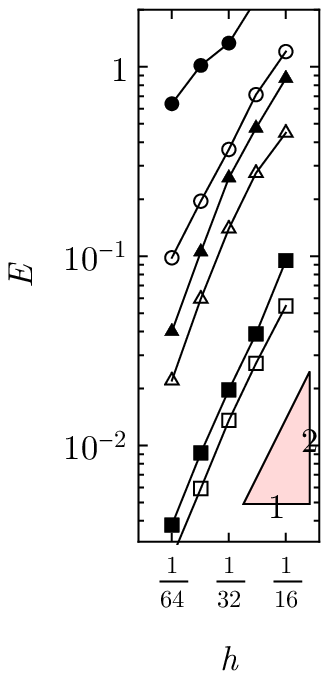}
\caption{Case (d). Relative errors versus $h$ for $\nu=10^{-2}$ (left) and $\nu=10^{-4}$ (right).}
\label{fig:caseNSL10_he}
\end{figure}

Figure \ref{fig:caseNSL10_he} shows the graphs of the errors.
When $\nu=10^{-2}$, we observe the almost same behavior as in Case (b).
When $\nu=10^{-4}$, the values of $E_{\ell^\infty(L^2)}(u)$ (\mone,\mfour) and $E_{\ell^2(L^2)}(p)$ (\mthree,\msix) are almost two to four times as large as in Case (b),
while the values of $E_{\ell^2(H^1_0)}(u)$ (\mtwo,\mfive) are almost same as in Case (b).

\begin{exam} \label{exam:forceNS}
In the Navier--Stokes problem we set
\[
	\Omega = (0, 1)^2, \quad T=40, \quad \nu = 10^{-4}, \quad f(x,t) = (0, 10 \sin(2\pi x_2))^T, \quad u^0 = 0,
\]
and compare Schemes {\schemethree} and \schemefour.
\end{exam}

We can easily check that the solution is $(u, p)(x,t)=(0, -\frac{5}{\pi} \cos(2\pi x_2))$.
We use the mesh shown in Fig.~\ref{fig:irsq_renum} and take $\Delta t = 0.01$.
We set the stabilization parameter $\dzero=10^{-3}$ for Scheme {\schemefour}.

\begin{figure}
	\centering
	\includegraphics[width=2in]{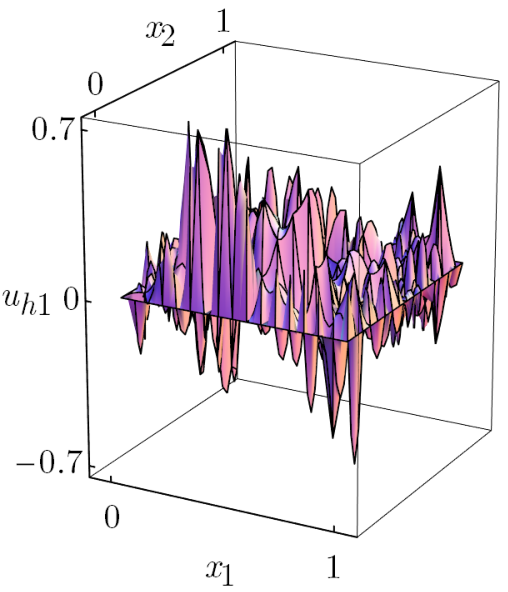} \qquad
	\includegraphics[width=2in]{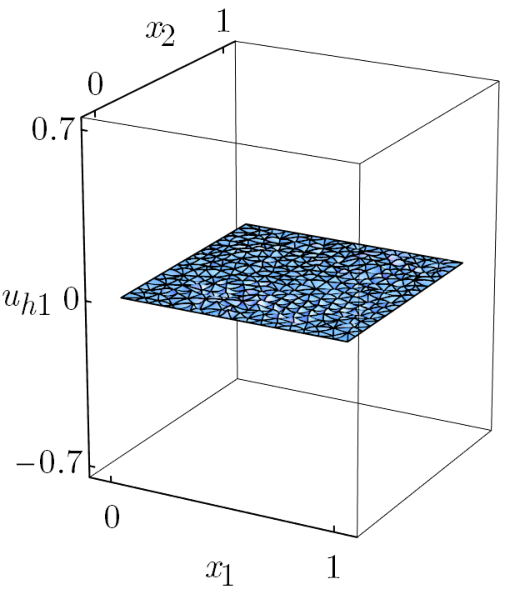} \\
	\includegraphics[width=2in]{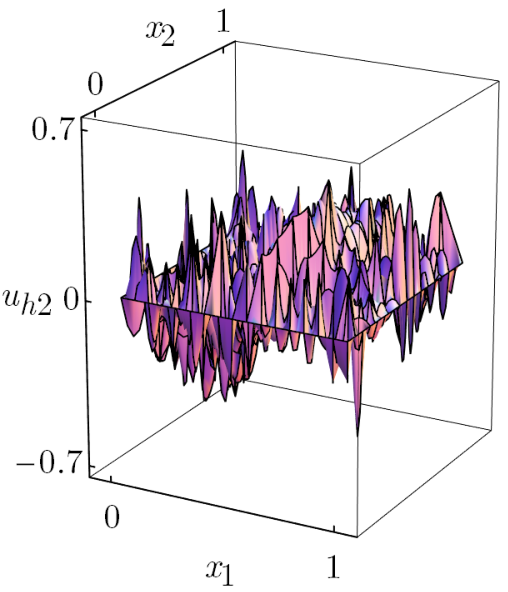} \qquad
	\includegraphics[width=2in]{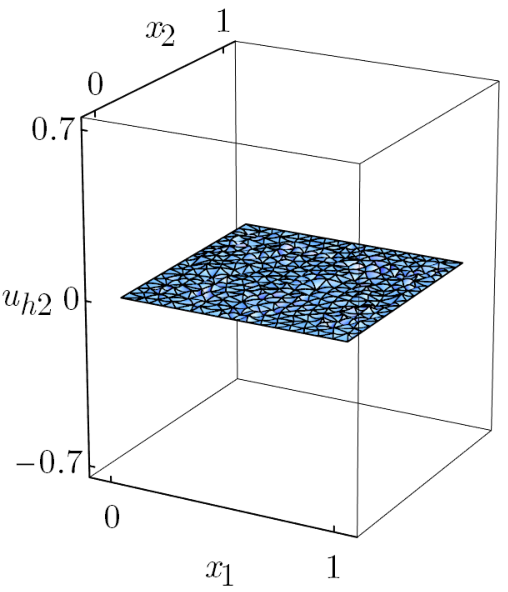} 
	\caption{Example \ref{exam:forceNS}. Stereographs of $u_{h1}^n$ (top) and $u_{h2}^n$ (bottom) at $t^n=40$ by Scheme {\schemethree} (left) and Scheme {\schemefour} (right).}
	\label{fig:ex2u}
\end{figure}
\begin{figure}
	\centering
	\includegraphics[width=1.7in]{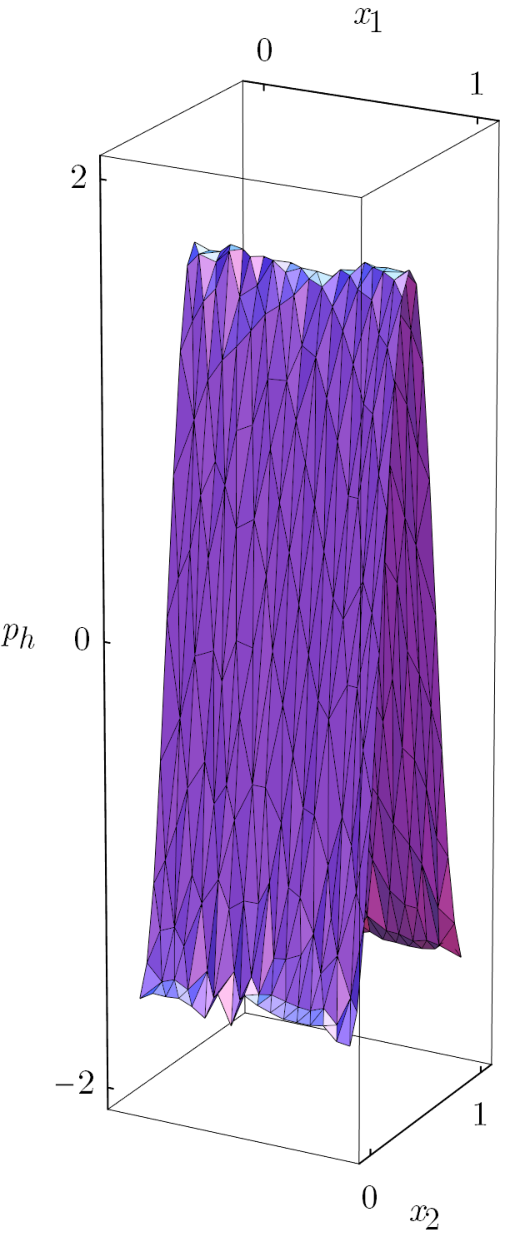} \qquad
	\includegraphics[width=1.7in]{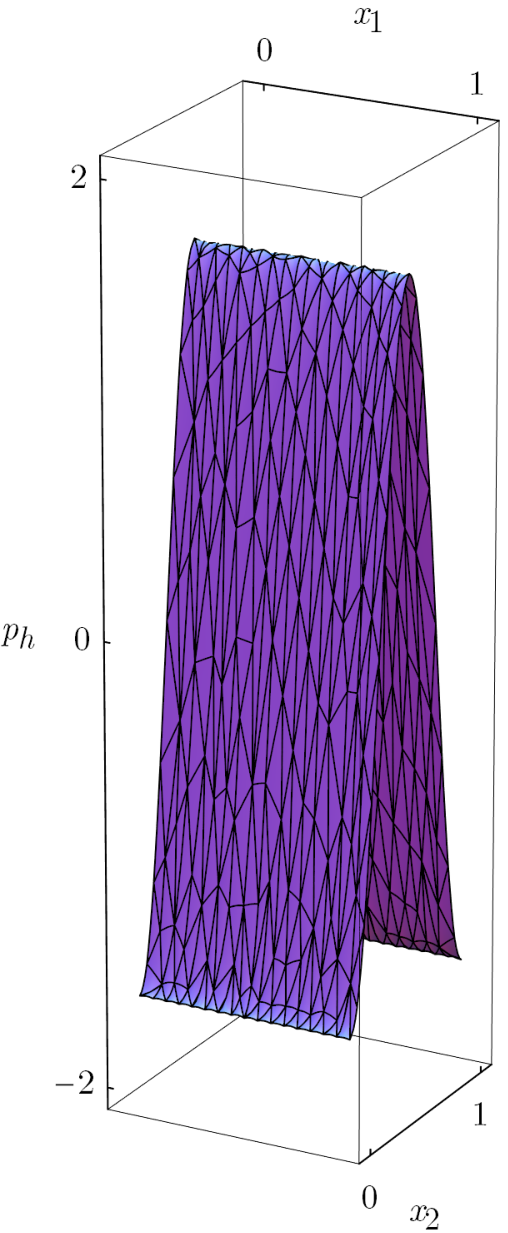}
	\caption{Example \ref{exam:forceNS}. Stereographs of $p_{h}^n$  at $t^n=40$ by Scheme {\schemethree} (left) and Scheme {\schemefour} (right).}
	\label{fig:ex2p}
\end{figure}

Figures \ref{fig:ex2u} and \ref{fig:ex2p} show the stereographs of the solutions $(u_h^n, p_h^n)$ at $t^n=40$ by the both schemes.
In Scheme {\schemethree}, oscillation is clearly observed for both components of the velocity and they are far from the constant zero, while in Scheme {\schemefour} the velocity is almost zero although small ruggedness is observed.
For the pressure, difference between the two schemes is small compared to the velocity but the solution by Scheme {\schemefour} is better.

\section{Concluding remarks} \label{sec:conclusions}
We constructed a pressure-stabilized Lagrange--Galerkin scheme for the Oseen problem with high-order elements, and showed an error estimate with the constant independent of the viscosity.
The numerical examples showed the scheme has higher accuracy than the scheme with Taylor--Hood element especially for problems with small viscosity and large pressures.
(i) Choice of the stabilization parameter in the pressure stabilization term, 
(ii) extension of the discussion to the Navier--Stokes problems, and 
(iii) numerical experiments of physically relevant problems
will be future works.

\paragraph{Acknowledgements}
The author would like to express his gratitude to Professor Emeritus Masahisa Tabata of Kyushu University for valuable discussions and encouragements.
This work was supported 
by Japan Society for the Promotion of Science (JSPS)
under Grant-in-Aid for JSPS Fellows, No. 26$\cdot$964, and under the Japanese-German Graduate Externship (Mathematical Fluid Dynamics), and
by CREST, Japan Science and Technology Agency.

\appendix
\section{Estimates for LG schemes}

Lemma \ref{lemm:jacobiest} is shown in \cite[Lemma 5.7]{TabataUchiumiNSA}.

\begin{lemma} 
	\label{lemm:jacobiest}
	Let $w^*\in W^{1,\infty}(\Omega)^d$ and $X_1(w^*)$ be the mapping defined in \eqref{eq:x1def}.
	Under the condition $\Delta t |w^*|_{1,\infty} \leq 1/4$,
	the estimate 
	\begin{equation*}
	\frac12 \leq \det\biggl(\frac{\partial X_1(w^*)}{\partial x} \biggr) \leq \frac32
	\end{equation*}
	holds,
	where $\det (\partial X_1(w^*)/\partial x)$ is the Jacobian of $X_1(w^*)$.
\end{lemma}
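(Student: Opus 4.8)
The plan is to differentiate $X_1(w^*)$ explicitly and estimate the resulting determinant by expanding around the identity. By \eqref{eq:x1def} we have $X_1(w^*)(x) = x - \Delta t\, w^*(x)$, so its Jacobian matrix is
\[
	\frac{\partial X_1(w^*)}{\partial x}(x) = I - A(x), \qquad A(x) := \Delta t\, \frac{\partial w^*}{\partial x}(x),
\]
where $I$ is the $d\times d$ identity matrix and $\partial w^*/\partial x = (\partial w_i^*/\partial x_j)_{i,j=1}^d$; since $w^*\in W^{1,\infty}(\Omega)^d$ this is defined for almost every $x$. By the definition of $|\cdot|_{1,\infty}$, the pointwise Frobenius norm of $A$ satisfies $\|A(x)\|_F = \Delta t\,\|(\partial w^*/\partial x)(x)\|_F \le \Delta t\,|w^*|_{1,\infty} \le 1/4$ for almost every $x$, so it suffices to prove that $\det(I-A)\in[1/2,3/2]$ for every real $d\times d$ matrix $A$ with $\|A\|_F\le 1/4$ and $d\in\{2,3\}$.

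Next I would expand along the characteristic polynomial: $\det(I-A) = \sum_{k=0}^d (-1)^k e_k(A)$, where $e_k(A)$ is the sum of the $k\times k$ principal minors of $A$ (equivalently, the $k$-th elementary symmetric function of its, possibly complex, eigenvalues), with $e_0(A)=1$, $e_1(A)=\mathrm{tr}\,A$, $e_d(A)=\det A$. The claim then reduces to $\sum_{k=1}^d |e_k(A)| < 1/2$. For each $k$ I would bound $|e_k(A)|$ using only $\|A\|_F$: Cauchy--Schwarz gives $|e_1(A)|\le \sqrt d\,\|A\|_F$; applying $2|ab|\le a^2+b^2$ to every product occurring in the principal $2\times 2$ minors gives $|e_2(A)|\le \tfrac{d-1}{2}\|A\|_F^2$; and for $e_3=\det A$ (relevant only when $d=3$), Hadamard's inequality together with AM--GM on the squared row norms gives $|e_3(A)|\le (\|A\|_F^2/3)^{3/2}$. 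These are the cases $d\le 3$ of the uniform bound $|e_k(A)| \le \binom dk (\|A\|_F^2/d)^{k/2}$, which follows for all $d$ from Schur's inequality $\sum_i |\lambda_i|^2 \le \|A\|_F^2$ together with Maclaurin's inequality; either route works.

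Summing these bounds and inserting $\|A\|_F\le 1/4$ yields, for $d\le 3$,
\[
	\bigl|\det(I-A) - 1\bigr| \;\le\; \sum_{k=1}^{d}\binom{d}{k}\Bigl(\frac{\|A\|_F^2}{d}\Bigr)^{k/2} \;=\; \Bigl(1 + \frac{\|A\|_F}{\sqrt d}\Bigr)^{d} - 1 \;\le\; \Bigl(1 + \frac{1}{4\sqrt 3}\Bigr)^{3} - 1 \;<\; \frac12,
\]
where the last inequality holds because $(1+\tfrac{1}{4\sqrt d})^d$ is increasing in $d$, so the worst case among $d\in\{1,2,3\}$ is $d=3$. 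This gives $\det(\partial X_1(w^*)/\partial x)\in(1/2,3/2)$ almost everywhere, which is the assertion. The one genuinely delicate point is numerical: the final bound for $d=3$ equals approximately $0.4985$, so it is only barely below $1/2$, and equality is essentially attained when $\partial w^*/\partial x$ is a scalar multiple of the identity (all eigenvalues equal); hence the estimates on the $e_k$ must be used in sharp form, and any lossy step — such as the crude bound $\det(I-A)=\prod_i(1-\lambda_i)$ with $|\lambda_i|\le 1/4$, which only gives $|\det(I-A)-1|\le (5/4)^3-1 > 1/2$ — is not good enough. This tightness is exactly why the hypothesis carries the constant $1/4$.
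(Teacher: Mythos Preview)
Your argument is correct. The paper itself does not prove this lemma; it simply cites \cite[Lemma~5.7]{TabataUchiumiNSA}, so there is no in-paper proof to compare against. Your route---writing the Jacobian as $I-A$ with $\|A\|_F\le\Delta t\,|w^*|_{1,\infty}\le 1/4$ (this is exactly how the paper defines $|\cdot|_{1,\infty}$), expanding $\det(I-A)=\sum_{k}(-1)^k e_k(A)$, and bounding each $e_k$ sharply by $\binom{d}{k}(\|A\|_F/\sqrt d)^k$---is a clean self-contained proof. The elementary bounds you give for $e_1,e_2,e_3$ are all valid, and the eigenvalue route via $|e_k(\lambda)|\le e_k(|\lambda|)$, Maclaurin $(e_k/\binom{d}{k})^{1/k}\le e_1/d$, Cauchy--Schwarz $e_1\le\sqrt d\,(\sum|\lambda_i|^2)^{1/2}$, and Schur $\sum|\lambda_i|^2\le\|A\|_F^2$ also goes through.

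Your closing remark is a genuine observation worth keeping: for $d=3$ the resulting bound $(1+1/(4\sqrt3))^3-1\approx 0.4985$ sits just under $1/2$, and equality in every step is attained when $A$ is a negative multiple of the identity, so the constant $1/4$ in the hypothesis is essentially forced by the constant $1/2$ in the conclusion. This also explains why the looser spectral-radius argument $\prod_i(1-\lambda_i)$ with $|\lambda_i|\le 1/4$ is not enough.
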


Lemma \ref{lemm:bijective_1} is shown in \cite[Lemma 1]{RuiTabata2002}.
\begin{lemma}
	\label{lemm:bijective_1}
	Let $w^*\in W^{1,\infty}_0(\Omega)^d$ and $X_1(w^*)$ be the mapping defined in \eqref{eq:x1def}.
	Under the condition $\Delta t|w^*|_{1,\infty} \leq 1/4$,
	there exists a positive constant $c$
	independent of $\Delta t$
	such that for $v \in L^2(\Omega)^d$ 
	\begin{equation*}
	\| v \circ X_1(w^*) \|_{0}^{2}
	\leq (1+ c |w^*|_{1,\infty} \Delta t) \|v\|_0^{2}.
	\end{equation*}
\end{lemma}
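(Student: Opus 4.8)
The plan is to convert the composition norm into a weighted $L^2$ integral of $v$ by the substitution $y=X_1(w^*)(x)$, and then to bound the resulting weight. Writing $J(x):=\det(\partial X_1(w^*)/\partial x)(x)$ for the Jacobian, the change-of-variables formula gives
\begin{equation*}
\|v\circ X_1(w^*)\|_0^2 = \int_\Omega |v(X_1(w^*)(x))|^2\,dx = \int_\Omega \frac{|v(y)|^2}{J(X_1(w^*)^{-1}(y))}\,dy,
\end{equation*}
so the assertion reduces to the pointwise estimate $1/J \leq 1 + c|w^*|_{1,\infty}\Delta t$.

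Before using this identity I must check that $X_1(w^*)$ is a bijection of $\Omega$ onto itself; the two ingredients needed are already available. Since $w^*\in W^{1,\infty}_0(\Omega)^d$ vanishes on $\partial\Omega$, the map $X_1(w^*)(x)=x-w^*(x)\Delta t$ fixes the boundary pointwise, and under the hypothesis $\Delta t|w^*|_{1,\infty}\leq 1/4$ Lemma \ref{lemm:jacobiest} gives $1/2\leq J\leq 3/2$, so $X_1(w^*)$ has everywhere positive Jacobian and is locally invertible. Combining the boundary-fixing property with the uniform positivity of $J$, a standard degree-theoretic argument shows that $X_1(w^*)$ is a homeomorphism of $\mybar\Omega$ onto itself, which legitimizes the change of variables above.

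It then remains to establish the quantitative bound on $1/J$. From $\partial X_1(w^*)/\partial x = I - \Delta t\,\partial w^*/\partial x$ and the fact that every entry of $\partial w^*/\partial x$ is bounded in modulus by $|w^*|_{1,\infty}$, I expand the determinant as $J = 1 - \Delta t\,(\nabla\cdot w^*) + \rho$, where $\rho$ collects all terms of order at least two in $\Delta t$. Each such term is a product of at least two entries of $\partial w^*/\partial x$ times a power of $\Delta t$, so $|\rho|\leq C(d)(\Delta t|w^*|_{1,\infty})^2$; together with $|\nabla\cdot w^*|\leq\sqrt d\,|w^*|_{1,\infty}$ and the restriction $\Delta t|w^*|_{1,\infty}\leq 1/4$ this yields a lower bound $J\geq 1 - c_0|w^*|_{1,\infty}\Delta t$. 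Since Lemma \ref{lemm:jacobiest} also gives $J\geq 1/2$, the elementary inequality $(1-s)^{-1}\leq 1+2s$ for $s\in[0,1/2]$ converts this into $1/J\leq 1 + c|w^*|_{1,\infty}\Delta t$ pointwise, and inserting it into the displayed identity completes the proof. The main obstacle is not the determinant expansion, which is routine, but the bijectivity of $X_1(w^*)$: making the substitution rigorous requires combining the boundary condition $w^*|_{\partial\Omega}=0$ with the uniform Jacobian bounds, which is precisely the point isolated in \cite{RuiTabata2002}.
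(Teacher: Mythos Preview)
Your argument is correct and is precisely the standard route: change variables $y=X_1(w^*)(x)$, reduce to a pointwise bound on $1/J$, and control $J$ via the expansion $\det(I-\Delta t\,\nabla w^*)=1-\Delta t\,\nabla\cdot w^*+O((\Delta t|w^*|_{1,\infty})^2)$ together with the a priori bound $J\ge 1/2$. The paper itself does not supply a proof of this lemma; it simply cites \cite[Lemma~1]{RuiTabata2002}, whose argument is essentially the one you have written, so there is nothing to compare.

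Two minor polishing remarks. First, the bijectivity of $X_1(w^*)$ can be obtained more directly than via degree theory: since $|X_1(w^*)(x)-X_1(w^*)(y)|\ge (1-\Delta t|w^*|_{1,\infty})|x-y|$, the map is injective and bi-Lipschitz onto its image, and the boundary condition $w^*|_{\partial\Omega}=0$ then forces the image to be $\mybar\Omega$. Second, because $w^*$ is only $W^{1,\infty}$, the Jacobian exists merely almost everywhere and the substitution is justified by the area formula for Lipschitz maps rather than the classical $C^1$ change of variables; this does not affect the estimate but is worth mentioning if you want the proof to be self-contained.
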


We now show an estimate for $R_1^n$ in Lemma \ref{lemm:estR1}, or tools for estimating $R_2^n$ and $R_3^n$ in Lemmas \ref{lemm:twofunc} and \ref{lemm:estR3pre}, where $R_i^n$, $i=1,2,3$, are defined in \eqref{eq:defR}.
Although these estimates are frequently used in the analysis of the LG method, e.g. \cite{NotsuTabataOseen2015,TabataUchiumiNSA}, we show proofs of Lemmas \ref{lemm:estR1} and \ref{lemm:estR3pre} for completeness.

\begin{lemma} \label{lemm:estR1}
	Suppose that $u\in Z^2$, $w \in C(W^{1,\infty}_0) \cap  H^1(L^\infty)$ and $\Delta t |w|_{C(W^{1,\infty})} \leq 1/4$. 
	Then
	\begin{align*}
	&\|R_1^n\|_0  \leq \sqrt{\Delta t}  \left[ \sqrt{\frac{2}{3}} (\|w^{n-1}\|_{0,\infty}^2+1)  \|u\|_{Z^2(t^{n-1},t^n)} +  \left\| \frac{\partial w}{\partial t} \right\|_{L^2(t^{n-1}, t^n; L^\infty)} \|\nabla u^n\|_0 \right].
	\end{align*}
\end{lemma}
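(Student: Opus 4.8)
The plan is the classical Lagrange--Galerkin consistency argument, splitting $R_1^n$ into the error caused by freezing the convective velocity at the previous time level and the truncation error of the one-step backward characteristic.

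First I would introduce the \emph{linear} characteristic through $x$: for $s\in[t^{n-1},t^n]$ set $\widetilde X(s;x):=x-(t^n-s)\,w^{n-1}(x)$, so that $\widetilde X(t^n;x)=x$, $\widetilde X(t^{n-1};x)=X_1(w^{n-1})(x)$, and $\partial_s\widetilde X(s;x)=w^{n-1}(x)$ is independent of $s$. Put $h(x,s):=u(\widetilde X(s;x),s)$; then $h(x,t^n)=u^n(x)$ and $h(x,t^{n-1})=(u^{n-1}\circ X_1(w^{n-1}))(x)$, so the finite difference in $R_1^n$ equals $\bigl(h(x,t^n)-h(x,t^{n-1})\bigr)/\Delta t$. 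Taylor's formula with integral remainder about $s=t^n$ gives
\[
	\frac{h(x,t^n)-h(x,t^{n-1})}{\Delta t}=\partial_s h(x,t^n)-\frac1{\Delta t}\int_{t^{n-1}}^{t^n}(s-t^{n-1})\,\partial_s^2 h(x,s)\,ds,
\]
and since $\partial_s h(x,t^n)=\partial_t u^n(x)+w^{n-1}(x)\cdot\nabla u^n(x)$, subtracting this from $\partial_t u^n+w^n\cdot\nabla u^n$ yields
\[
	R_1^n(x)=\bigl(w^n(x)-w^{n-1}(x)\bigr)\cdot\nabla u^n(x)+\frac1{\Delta t}\int_{t^{n-1}}^{t^n}(s-t^{n-1})\,\partial_s^2 h(x,s)\,ds=:I(x)+II(x).
\]

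For $I$, I would write $w^n-w^{n-1}=\int_{t^{n-1}}^{t^n}\partial_t w(\cdot,s)\,ds$, so by Cauchy--Schwarz in $s$, $\|w^n-w^{n-1}\|_{0,\infty}\le\sqrt{\Delta t}\,\|\partial w/\partial t\|_{L^2(t^{n-1},t^n;L^\infty)}$, and therefore $\|I\|_0\le\sqrt{\Delta t}\,\|\partial w/\partial t\|_{L^2(t^{n-1},t^n;L^\infty)}\|\nabla u^n\|_0$, which is exactly the second summand of the claimed bound. For $II$, differentiating $h$ twice in $s$ (using $\partial_s\widetilde X=w^{n-1}$) gives, at the point $(\widetilde X(s;x),s)$,
\[
	\partial_s^2 h(x,s)=\partial_t^2 u+2\,w^{n-1}(x)\cdot\nabla\partial_t u+\sum_{i,j=1}^d w_i^{n-1}(x)w_j^{n-1}(x)\,\partial_i\partial_j u,
\]
hence pointwise $|\partial_s^2 h(x,s)|\le\bigl(1+\|w^{n-1}\|_{0,\infty}^2\bigr)\bigl(|\partial_t^2 u|+|\nabla\partial_t u|+|D^2 u|\bigr)(\widetilde X(s;x),s)$, after $2|w^{n-1}(x)|\le1+\|w^{n-1}\|_{0,\infty}^2$.

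The last ingredient is the change of variables $y=\widetilde X(s;x)$. Because $w^{n-1}\in W^{1,\infty}_0(\Omega)^d$ and $(t^n-s)\,|w^{n-1}|_{1,\infty}\le\Delta t\,|w|_{C(W^{1,\infty})}\le 1/4$ for every $s\in[t^{n-1},t^n]$, Lemmas \ref{lemm:jacobiest} and \ref{lemm:bijective_1}, applied with the shortened increment $t^n-s$, show that $\widetilde X(s;\cdot)$ is a bijection of $\Omega$ onto itself with Jacobian bounded below by $1/2$, so $\|\varphi\circ\widetilde X(s;\cdot)\|_0^2\le 2\|\varphi\|_0^2$ for any $\varphi\in L^2(\Omega)$. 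Applying this to each of the three terms, pulling out $\sqrt{\Delta t}$ by the weighted Cauchy--Schwarz inequality $\frac1{\Delta t}\bigl|\int_{t^{n-1}}^{t^n}(s-t^{n-1})g(s)\,ds\bigr|\le\frac{\sqrt{\Delta t}}{\sqrt3}\bigl(\int_{t^{n-1}}^{t^n}g(s)^2\,ds\bigr)^{1/2}$, and bounding $\int_{t^{n-1}}^{t^n}\bigl(\|\partial_t^2 u(\cdot,s)\|_0^2+|\partial_t u(\cdot,s)|_1^2+|u(\cdot,s)|_2^2\bigr)ds$ by $\|u\|_{Z^2(t^{n-1},t^n)}^2$, I obtain $\|II\|_0\le c\sqrt{\Delta t}\,(\|w^{n-1}\|_{0,\infty}^2+1)\,\|u\|_{Z^2(t^{n-1},t^n)}$; together with the bound on $I$ this gives the lemma. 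The main obstacle is not conceptual but a matter of bookkeeping: arranging the weighted time integrals, the Cauchy--Schwarz steps and the norm equivalence tightly enough to land on the stated constant, and checking at each $s$ that the smallness condition $\Delta t\,|w|_{C(W^{1,\infty})}\le 1/4$ still controls the shorter intervals $[s,t^n]$ so that Lemmas \ref{lemm:jacobiest}--\ref{lemm:bijective_1} remain applicable.
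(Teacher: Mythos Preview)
Your proposal is correct and follows essentially the same route as the paper's proof: the paper splits $R_1^n=R_{11}^n+R_{12}^n$ with $R_{12}^n=(w^n-w^{n-1})\cdot\nabla u^n$ (your $I$) and $R_{11}^n$ the Taylor remainder along the linear characteristic $y(x,s)=x-s\,w^{n-1}(x)\Delta t$, $t(s)=t^n-s\Delta t$ (your $II$ after the reparametrization $s\mapsto (t^n-s)/\Delta t$), then uses the Jacobian bound of Lemma~\ref{lemm:jacobiest} for the change of variables and the weighted Cauchy--Schwarz in time to extract $\sqrt{2/3}\,\sqrt{\Delta t}$. The only cosmetic difference is the parametrization of the characteristic; otherwise the decomposition, the key lemmas invoked, and the order of the estimates are identical.
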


\begin{proof}
We estimate $\| R_1^n \|_0$ by dividing
\begin{align*}
	R_{1}^n & = 
	\left( \frac{\partial u^n}{\partial t} + (w^{n-1} \cdot \nabla)u^n 
	-\frac{u^n-u^{n-1}\circ X_1(w^{n-1})}{\Delta t} \right)
	+ \left( (w^{n} \cdot \nabla)u^n -  (w^{n-1} \cdot \nabla)u^n \right) \\
	& =: R_{11}^n + R_{12}^n.
\end{align*}
For $R_{11}^n$,
by setting 
\[ 
	y(x,s):= x-sw^{n-1}(x) \Delta t, ~ t(s):=t^n-s\Delta t, 
\]
we use Taylor's theorem to get
\begin{align*}
	&(u^{n-1} \circ X_1(w^{n-1}))(x) = u^n(x) - \Delta t \left( \frac{\partial u^n}{\partial t}  + (w^{n-1} \cdot \nabla) u^n \right)(x) \\
	& \quad + \Delta t^2 \int_0^1 (1-s) \left(  \frac{\partial}{\partial t} + w^{n-1} (x) \cdot \nabla \right)^2 u(y(x,s), t(s)) ds.
\end{align*}
We then have
\begin{align*}
	\|R_{11}^n\|_0 
	&\leq \Delta t \int_0^1 (1-s) \left\|  \left(  \frac{\partial}{\partial t} + w^{n-1} (\cdot) \cdot \nabla \right)^2 u  ( y(\cdot, s), t(s)) \right\|_0 ds\\
	&\leq \sqrt{2/3} \sqrt{\Delta t} (\|w^{n-1}\|_{0,\infty}^2+1)  \|u\|_{Z^2(t^{n-1},t^n)}.
\end{align*}
where we have used the transformation of independent variables from $x$ to $y$ and $s$ to $t$, and the estimate $|\det(\partial x/\partial y)| \leq 2$ by virtue of Lemma \ref{lemm:jacobiest}.
It is easy to show
\[
	\|R_{12}^n\|_0 \leq \sqrt{\Delta t} \left\| \frac{\partial w}{\partial t} \right\|_{L^2(t^{n-1}, t^n; L^\infty)} \|\nabla u^n\|_0.
\]
Combining the two estimate, we have the conclusion. 
\end{proof}

Lemma \ref{lemm:twofunc} is a direct consequence of \cite[Lemma 4.5]{AchdouGuermond2000} and Lemma \ref{lemm:jacobiest}.
\begin{lemma}
	\label{lemm:twofunc}
	Let $1\leq q <\infty$, $1\leq p \leq \infty$, $1/p+1/p'=1$ and
	$w_i\in W_0^{1,\infty}(\Omega)^d$, $i=1,2$.
	Under the condition $\Delta t |w_i|_{1,\infty}\leq 1/4$,
	it holds that,
	for $v \in W^{1,qp'}(\Omega)^d$,
\begin{equation*}\label{eq:twofunc}
	\| v \circ X_1(w_1) - v \circ X_1(w_2)\|_{0,q} \leq 
	2^{1/(qp')}
	\Delta t \|w_1-w_2\|_{0,pq} 
	\|\nabla v \|_{0,qp'},
\end{equation*} 
	where $X_1(\cdot)$ is defined in \eqref{eq:x1def}.
\end{lemma}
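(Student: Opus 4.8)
The plan is to reduce the statement to the abstract composition-difference estimate of Achdou and Guermond \cite{AchdouGuermond2000}, supplying the Jacobian bound of Lemma \ref{lemm:jacobiest} to fix the explicit constant $2^{1/(qp')}$. The starting point is to connect the two characteristic maps $X_1(w_1)$ and $X_1(w_2)$ by a linear homotopy. Since $X_1(w^*)(x) = x - w^*(x)\Delta t$ is affine in $w^*$, I set $w_\theta := \theta w_1 + (1-\theta) w_2$ for $\theta \in [0,1]$, so that $X_1(w_\theta)(x) = x - w_\theta(x)\Delta t$ interpolates between $X_1(w_2)$ at $\theta = 0$ and $X_1(w_1)$ at $\theta = 1$. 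Convexity of the seminorm gives $|w_\theta|_{1,\infty} \leq \max(|w_1|_{1,\infty}, |w_2|_{1,\infty})$, so the hypothesis $\Delta t |w_i|_{1,\infty} \leq 1/4$ ensures $\Delta t |w_\theta|_{1,\infty} \leq 1/4$ uniformly in $\theta$, which is exactly what is needed both for $X_1(w_\theta)$ to be an admissible change of variables and to apply Lemma \ref{lemm:jacobiest}.

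Next I would differentiate the composition along the homotopy. By the chain rule,
\[
	\frac{d}{d\theta} v(X_1(w_\theta)(x)) = -\Delta t\, \nabla v(X_1(w_\theta)(x)) \cdot (w_1(x) - w_2(x)),
\]
and integrating over $\theta \in [0,1]$ yields the pointwise identity
\[
	v \circ X_1(w_1)(x) - v \circ X_1(w_2)(x) = -\Delta t \int_0^1 \nabla v(X_1(w_\theta)(x)) \cdot (w_1(x) - w_2(x))\, d\theta.
\]
Taking absolute values and then the $L^q(\Omega)$-norm, I would apply Jensen's inequality in $\theta$ (using $q \geq 1$) to pull the $q$-th power inside the $\theta$-integral, Fubini to exchange the $\theta$- and $x$-integrations, and finally H\"older's inequality in $x$ with the conjugate exponents $p, p'$ to separate $|w_1 - w_2|^q$ (in $L^p$) from $|\nabla v \circ X_1(w_\theta)|^q$ (in $L^{p'}$). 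This produces the factors $\|w_1 - w_2\|_{0,pq}$ and $\|\nabla v \circ X_1(w_\theta)\|_{0,qp'}$ together with the prefactor $\Delta t$.

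The remaining point, where Lemma \ref{lemm:jacobiest} enters, is to remove the dependence of $\|\nabla v \circ X_1(w_\theta)\|_{0,qp'}$ on $\theta$. I would perform the change of variables $y = X_1(w_\theta)(x)$; since $w_\theta \in W^{1,\infty}_0(\Omega)^d$ the map sends $\Omega$ onto itself, and Lemma \ref{lemm:jacobiest} gives $\det(\partial X_1(w_\theta)/\partial x) \geq 1/2$, so $dx \leq 2\, dy$ and hence $\|\nabla v \circ X_1(w_\theta)\|_{0,qp'}^{qp'} \leq 2 \|\nabla v\|_{0,qp'}^{qp'}$. The constant $2$ raised to the power $1/p'$, and then to $1/q$ after taking the $q$-th root of the whole inequality, yields precisely $2^{1/(qp')}$, while the $\theta$-integral of the now $\theta$-independent bound over $[0,1]$ contributes only a factor $1$, giving the asserted estimate. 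I do not expect a serious obstacle here: the only delicate ingredient is the bijectivity of $X_1(w_\theta)$ justifying the change of variables, which is standard for $W^{1,\infty}_0$ velocities under the smallness condition $\Delta t |w_\theta|_{1,\infty} \leq 1/4$ and is already packaged in the cited Achdou--Guermond lemma.
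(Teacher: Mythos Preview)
Your proposal is correct and matches the paper's approach exactly: the paper states that Lemma~\ref{lemm:twofunc} is a direct consequence of \cite[Lemma~4.5]{AchdouGuermond2000} together with the Jacobian bound of Lemma~\ref{lemm:jacobiest}, and you have simply unpacked that combination---the homotopy $w_\theta$, the chain-rule identity, H\"older in $x$, and the change of variables using $\det(\partial X_1(w_\theta)/\partial x)\geq 1/2$---to exhibit the explicit constant $2^{1/(qp')}$.
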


\begin{lemma} \label{lemm:estR3pre}
Suppose that $v \in H^1(H^1)$,  $w^*\in W^{1,\infty}_0(\Omega)^d$, and $\Delta t |w^*|_{1,\infty} \leq 1/4$.
Then
\begin{align*}
	&\left\| v^n - v^{n-1} \circ X_1(w^*) \right\|_0 
	\leq \sqrt{2 \Delta t }\biggl(  \left\| \frac{\partial v}{\partial t} \right\|_{L^2(t^{n-1}, t^n ;L^2)} + \|w^*\|_{0,\infty} \|\nabla v\|_{L^2(t^{n-1}, t^n; L^2)} \biggr),
\end{align*}
where $X_1(\cdot)$ is defined in \eqref{eq:x1def}.
\end{lemma}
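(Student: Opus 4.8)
The plan is to express the difference $v^n - v^{n-1}\circ X_1(w^*)$ as a one-dimensional integral along the approximate characteristic segment and then bound its $L^2(\Omega)$-norm by a change of variables. Concretely, for $x\in\Omega$ and $s\in[0,1]$ I set $y(x,s):=x-s\,w^*(x)\Delta t$ and $t(s):=t^n-s\Delta t$, so that $y(\cdot,0)=\mathrm{id}$, $y(\cdot,1)=X_1(w^*)$, $t(0)=t^n$ and $t(1)=t^{n-1}$. For smooth $v$, the fundamental theorem of calculus applied to $s\mapsto v(y(x,s),t(s))$ gives
\begin{equation*}
	v^n(x) - (v^{n-1}\circ X_1(w^*))(x) = \Delta t \int_0^1 \Bigl( \frac{\partial v}{\partial t} + w^*(x)\cdot\nabla v \Bigr)(y(x,s),t(s))\, ds,
\end{equation*}
and the general case $v\in H^1(H^1)$ follows by a density argument, using that composition with the bi-Lipschitz map $X_1(w^*)$ is continuous on $L^2$.

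Next I take the $L^2(\Omega)$-norm in $x$, move it inside the $s$-integral by Minkowski's integral inequality, and then for each fixed $s$ perform the change of variables $x\mapsto y=y(x,s)$. Since $s\Delta t|w^*|_{1,\infty}\le\Delta t|w^*|_{1,\infty}\le 1/4$, Lemma \ref{lemm:jacobiest} applied with $sw^*$ in place of $w^*$ bounds the Jacobian of $x\mapsto y(x,s)$ below by $1/2$, hence $|\det(\partial x/\partial y)|\le 2$; and because $w^*\in W^{1,\infty}_0(\Omega)^d$ the map is a bijection of $\Omega$. Therefore each of $\|(\partial v/\partial t)(y(\cdot,s),t(s))\|_0$ and $\|\nabla v(y(\cdot,s),t(s))\|_0$ is bounded by $\sqrt 2$ times $\|(\partial v/\partial t)(\cdot,t(s))\|_0$, resp. $\|\nabla v(\cdot,t(s))\|_0$. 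Estimating $|w^*(x)\cdot\nabla v|\le\|w^*\|_{0,\infty}|\nabla v|$ pointwise then yields
\begin{equation*}
	\|v^n - v^{n-1}\circ X_1(w^*)\|_0 \le \sqrt 2\,\Delta t \int_0^1 \Bigl( \bigl\| \tfrac{\partial v}{\partial t}(\cdot,t(s)) \bigr\|_0 + \|w^*\|_{0,\infty}\|\nabla v(\cdot,t(s))\|_0 \Bigr)\, ds.
\end{equation*}

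Finally I change variables from $s$ to $\tau=t(s)$ in the remaining integral, turning $\int_0^1(\cdots)\,ds$ into $\Delta t^{-1}\int_{t^{n-1}}^{t^n}(\cdots)\,d\tau$, and apply the Cauchy--Schwarz inequality in time, $\Delta t^{-1}\int_{t^{n-1}}^{t^n} g\,d\tau \le \Delta t^{-1/2}\|g\|_{L^2(t^{n-1},t^n)}$, to both terms; this produces the factor $\sqrt 2\,\Delta t\cdot\Delta t^{-1/2}=\sqrt{2\Delta t}$ and the stated right-hand side. The only genuinely delicate point is the change-of-variables step: one must know that $y(\cdot,s)$ maps $\Omega$ onto $\Omega$ (which uses $w^*\in W^{1,\infty}_0$) and control its Jacobian uniformly in $s\in[0,1]$ via Lemma \ref{lemm:jacobiest}; the density argument passing from smooth $v$ to $v\in H^1(H^1)$ is then routine.
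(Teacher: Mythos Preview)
Your argument is correct and follows essentially the same route as the paper's proof: define the path $y(x,s)=x-s\,w^*(x)\Delta t$, $t(s)=t^n-s\Delta t$, write the difference via the fundamental theorem of calculus, take the $L^2$-norm inside the $s$-integral, and then conclude by the change of variables $x\mapsto y$ (using the Jacobian bound $|\det(\partial x/\partial y)|\le 2$ from Lemma~\ref{lemm:jacobiest}) and $s\mapsto t$ together with Cauchy--Schwarz in time. Your write-up is in fact more explicit than the paper's about the density argument for $v\in H^1(H^1)$ and about the uniformity in $s$ of the Jacobian estimate.
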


\begin{proof}
Similarly to the proof of Lemma \ref{lemm:estR1} by defining
\[ 
	y(x,s):= x-sw^{*}(x) \Delta t, ~ t(s):=t^n-s\Delta t, 
\]
we have the estimate
\[
	\left\| v^n - v^{n-1} \circ X_1(w^*) \right\|_0  \leq \Delta t \int_0^1 \left\| \left( \frac{\partial}{\partial t}  +  (w^{*}(\cdot) \cdot \nabla) \right) v (y(\cdot,s), t(s)) \right\|_0 ds.	
\]
The conclusion follows from
the transformation of independent variables from $x$ to $y$ and $s$ to $t$, and the estimate $|\det(\partial x/\partial y)| \leq 2$ by virtue of Lemma \ref{lemm:jacobiest}.
\end{proof}

%\bibliographystyle{plain}
%\bibliography{art6bib}

\end{document}